\pdfminorversion=5
\documentclass{article}
\usepackage{graphicx}
\usepackage{epstopdf}
\usepackage{amsmath}
\usepackage{amssymb}
\usepackage{bbm}
\usepackage{enumerate}
\usepackage[T1]{fontenc}
\usepackage[latin1]{inputenc}
\usepackage{hyperref}
\usepackage{color}
\usepackage{curves}
\usepackage{tikz}
\usepackage[hang]{caption}
\usepackage{ntheorem}
\usepackage{float}
\usepackage{mathrsfs}

\title{Water transport on finite graphs}
\author{Timo Vilkas
\\\normalsize Lunds universitet}

\theoremstyle{break}
\newtheorem{theorem}{Theorem}[section]

\newtheorem{lemma}{Lemma}[section]
\newtheorem{proposition}{Proposition}[section]
\theorembodyfont{\upshape}
\newtheorem{definition}{Definition}
\newtheorem*{remark}{Remark}
\newtheorem{example}{Example}[section]

\makeatletter
\let\c@proposition\c@theorem
\let\c@lemma\c@theorem
\let\c@corollary\c@theorem
\makeatother

\newenvironment{proof}{\noindent{\sc Proof:}}{\vspace{-0.5cm}~\hfill $\square$\vspace{0.5cm}}
\newenvironment{nproof}[1]{\noindent{\sc Proof #1:}}{\vspace{-1em}~\hfill $\square$\vspace{2em}}

\newcommand\N{\mathbb{N}}

\newcommand\Z{\mathbb{Z}}

\renewcommand\epsilon{\varepsilon}
\renewcommand\phi{\varphi}

\definecolor{darkblue}{rgb}{0,0,.5}
\hypersetup{colorlinks=true, breaklinks=true, linkcolor=blue, 
citecolor=darkblue, menucolor=blue, urlcolor=blue}


\begin{document}
\newpage
\maketitle
\begin{abstract}
    Consider a simple finite graph and its nodes to represent identical water barrels (containing different amounts of water) on a level plane. Each edge corresponds to a (locked, water-filled) pipe connecting two barrels below the plane. We fix one node $v$ and consider the optimization problem relating to the maximum value to which the level in $v$ can be raised without pumps, i.e.\ by opening/closing pipes in a suitable order.
    
    This fairly natural optimization problem originated from the analysis of an opinion formation process and proved to be not only sufficiently intricate in order to be of independent interest, but also difficult from an 
    algorithmic point of view.
\end{abstract}

\noindent
\textbf{Keywords:} Water transport, graph algorithms, optimization, complexity, greedy lattice animal.\\
\textbf{MSC2020:} 05C35, 05C85, 91A68

\section{Introduction}

Imagine a finite number of identical rainwater tanks, with a given capacity of $c$ liters, placed on a plane. Some of them are connected by underground (and water-filled) pipes, which can be closed by locks. After a heavy rain, the tanks collected (potentially different) amounts of water and one can consider the optimization problem to raise the water level in a fixed tank by opening and closing the locks.

To cast the problem into an apt mathematical model, consider a finite undirected graph $G=(V,E)$, which we can assume without loss of generality to be simple (i.e.\ having neither loops nor multiple edges). The barrels are represented by the nodes (with an assigned value corresponding to the water level in it) and the pipes are represented by the edges of the graph. We start with all pipes locked, an initial water profile $\big(\eta_0(u)\big)_{u\in V}\in[0,c]^V$ and a given target vertex $v\in V$, in which the water level is to be maximized. To accomplish this we can open a lock, which will lead to the amounts $a$ and $b$ in the incident barrels to level out -- partly if we close the lock early or to their average $\tfrac{a+b}{2}$ if we don't, see Figure \ref{barrels}.

\begin{figure}[ht]
     \centering
     \includegraphics[scale=0.9]{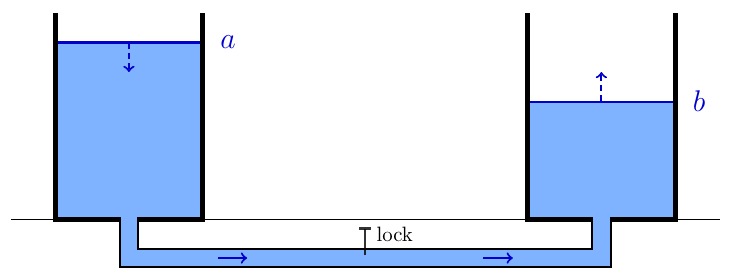}
     \caption{Converging water levels after opening a lock. \label{barrels}}
\end{figure}

In order to formalize this, let us define such a {\em move} to be represented by $(e,\mu)$, where $e=\langle x,y\rangle\in E$ is the pipe we opened and $\mu\in[0,\frac12]$ is the fraction of the difference moved from the fuller into the emptier incident barrel, i.e.\ if we had the profile $\big(\eta_{k}(u)\big)_{u\in V}$ before, it will be $\big(\eta_{k+1}(u)\big)_{u\in V}$ given by
 \begin{equation}\label{update}
    \eta_{k+1}(u)=\begin{cases}
    	\eta_{k}(x)+\mu\,\big(\eta_{k}(y)-\eta_{k}(x)\big),& \text{for } u=x\\
    	\eta_{k}(y)+\mu\,\big(\eta_{k}(x)-\eta_{k}(y)\big),& \text{for } u=y\\
    	\eta_{k}(u),& \text{for } u\in V\setminus\{x,y\}
    \end{cases}
 \end{equation}
after the move $(e,\mu)$. Based on this elementary building block, we can define the objective of the optimization problem:

\begin{definition}\label{kappa}
	Given graph $G=(V,E)$, initial water profile $\{\eta_0(u)\}_{u\in V}$ and a {\em target vertex} $v\in V$,
	we can execute a (finite) {\em move sequence}, represented by an ordered list of edges and fractions,
	$\phi\in (E\times[0,\tfrac12])^t$, where $\phi_k=(e_k,\mu_k)$ corresponds to a single move as in \eqref{update}. By $\kappa(v)$ we denote the supremum over all water levels achievable at $v$ with finite move sequences.
\end{definition}

Obviously, $\kappa(v)$ wouldn't change if we allowed infinite move sequences, as those can be approximated by finite ones. In a preliminary section, we will verify that neither restricting to complete moves (where $\mu=\frac12$)
nor allowing {\em hypermoves}, i.e.\ to open several pipes simultaneously, changes $\kappa(v)$, cf.\ Lemmas \ref{simplif} and \ref{simplif2}.

\subsection{Main results}

Based on these auxiliary results, we are going to prove that there always exists a {\em finite} sequence of hypermoves achieving water level $\kappa(v)$ at the target vertex (Theorem \ref{finitemacro}). In Section \ref{complexity} some light will be shed on heuristic algorithmic approaches to this optimization problem and in Subsection \ref{complex} we show that the water transport problem is NP-hard (Theorem \ref{NPhard}). Besides that, an abundance of example instances will be given both to illustrate the problems with intuitive heuristics and to deal with tractable graphs such as paths and the complete graph, for which the calculation of $\kappa(v)$ can be done explicitely. The last section contains further observations and related open problems.

\subsection{Related work}

First and foremost, this work complements an analysis of the same optimization problem on infinite graphs with i.i.d.\ random initial water levels \cite{infinite}, in which it was established that the two-sided infinite path $\Z$ behaves much more like a finite graph (in the sense that $\kappa(v)$ is random) as opposed to all other infinite, quasi-transitive graphs, for which $\kappa(v)$ is deterministic.

Readers familiar with mathematical models for opinion formation in groups might find that (\ref{update})
in essence resembles update rules in average preserving models for consensus formation in social networks.
Indeed, the problem at hand arises naturally in the theoretical analysis of such models 
related to the question of how extreme an opinion a fixed agent can get, given an initial opinion profile on a specified network graph (e.g.\ for the bounded confidence model introduced by Deffuant et al.\ in 2000, which was analyzed in \cite{ShareDrink}, \cite{Deffuant} and \cite{Lanchier}).

In order to tackle this question, Häggström \cite{ShareDrink} introduced a non-random pairwise averaging
process, which he proposed to call {\em Sharing a drink} (SAD). It was originally considered on $\Z$ only,
but can readily be generalized to any graph (see Definition \ref{SAD}). In fact, SAD is both a special case of
(as the initial profile considered there is $\eta_0=\delta_u$) and dual to the water transport described above, made precise in Lemma \ref{dual}. As Thm.\ 2.3 in \cite{ShareDrink} it was shown that on $\Z$ (and as a consequence also on any finite path) with initial profile $\delta_u$ it holds $\kappa(v)=\frac{1}{d(u,v)+1}$, where $d(u,v)$ is the graph distance between vertices $u$ and $v$. This result was recently generalized to simple graphs in full generality \cite{SAD}.

The duality of SAD to averaging processes in general, turned out to be a useful tool even in the analysis of other consensus formation processes, such as the edge-averaging process (cf.\ \cite{avg2} and \cite{avg}).

\section{Preliminaries}
\subsection{Sharing a drink}\label{sec2}

Let us first repeat the formal definition of the SAD-process:
\begin{definition}\label{SAD}
For a graph $G=(V,E)$ and some fixed vertex $u\in V$, let $\delta_u:\ V\to[0,1]^V$ denote the indicator of vertex $u$, i.e.
$$\delta_u(w)=\begin{cases}1\quad\text{for } w=u\\ 0 \quad\text{for } w\neq u.\end{cases}$$
Given a finite move sequence as above, say $\phi=\big((e_1,\mu_1),(e_2,\mu_2),\dots,(e_t,\mu_t)\big)$, the interaction process called {\em Sharing a drink (SAD)} initiated from vertex $u$ starts with initial water profile $\eta_0=\delta_u$, proceeds along $\phi$ according to \eqref{update} and after these $t$ moves ends up with a {\em terminal} water profile $\eta_t\in[0,1]^V$.
\end{definition}

When implementing a (finite) move sequence on an arbitrary initial water profile $\eta_0$, the terminal values at each vertex will obviously be convex combinations of the initial values. How much each vertex $u$ contributed to the amount of water at a given vertex $v$ is captured by the corresponding {\em dual} SAD-process:

\begin{lemma}[Duality]\label{dual}
Consider an initial water profile $\{\eta_0(u)\}_{u\in V}$ on $G=(V,E)$ and a finite move sequence $\phi$ of length
$t\in\N$. Then it holds for all $v\in V$ that
\begin{equation}\label{convcomb}\eta_t(v)=\sum_{u\in V}\xi_{u,v}(t)\,\eta_0(u),\end{equation}
where $\xi_{u,v}(t)$ is the value at vertex $u$ in the terminal profile of the SAD-process initiated from vertex $v$ with respect to the given move sequence in reversed (time) order, i.e.\ $\overleftarrow{\phi}=\big((e_t,\mu_t),\dots,(e_2,\mu_2),(e_1,\mu_1)\big)$.
\end{lemma}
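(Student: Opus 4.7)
The cleanest route is via linear algebra: every move is a linear transformation of the water profile, and crucially each such transformation is \emph{symmetric}, so reversing the order of composition corresponds to taking a transpose.

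Concretely, I would first observe that a single move $(e_k,\mu_k)$ with $e_k=\langle x,y\rangle$ acts on $\R^V$ by the matrix $M_k$ which equals the identity outside the $\{x,y\}$-block and on that block equals
\[
\begin{pmatrix} 1-\mu_k & \mu_k \\ \mu_k & 1-\mu_k \end{pmatrix}.
\]
Directly from \eqref{update} one sees that applying $M_k$ to $\eta_k$ produces $\eta_{k+1}$, and the displayed matrix is manifestly symmetric, so $M_k^T=M_k$. Iterating along the move sequence $\phi=\big((e_1,\mu_1),\dots,(e_t,\mu_t)\big)$ yields $\eta_t = M_t M_{t-1}\cdots M_1\,\eta_0$.

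The second step is the projection onto the target coordinate. Writing $\delta_v\in\R^V$ as the indicator of $v$, we have $\eta_t(v)=\delta_v^{\,T}\eta_t$, and using symmetry of each $M_k$,
\[
\eta_t(v) \;=\; \delta_v^{\,T}\,M_t\cdots M_1\,\eta_0 \;=\; \bigl(M_1^{\,T}\cdots M_t^{\,T}\,\delta_v\bigr)^T \eta_0 \;=\; \bigl(M_1 M_2 \cdots M_t\,\delta_v\bigr)^T\eta_0.
\]
The vector $M_1 M_2\cdots M_t\,\delta_v$ is obtained from $\delta_v$ by first applying $M_t$, then $M_{t-1}$, and so on down to $M_1$; this is by Definition \ref{SAD} precisely the terminal profile of the SAD-process started from $v$ along $\overleftarrow\phi$, whose $u$-th coordinate is by definition $\xi_{u,v}(t)$. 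Reading off the coordinates of the inner product then gives \eqref{convcomb}.

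There is no genuine obstacle here — the argument is essentially bookkeeping — but the one point that deserves emphasis is the symmetry $M_k^{\,T}=M_k$, since this is what makes the \emph{forward} averaging process its own formal adjoint and forces the dual process to run with the \emph{reversed} move sequence rather than the same one. (As a sanity check one could also prove the lemma by induction on $t$, but this requires carrying a slightly stronger hypothesis through the inductive step, because prepending a move to $\overleftarrow{\phi}$ does not correspond to a single increment of the SAD-process from $\delta_v$; the matrix identity above circumvents this issue at once.)
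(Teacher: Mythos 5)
Your proof is correct. The paper itself does not actually prove Lemma \ref{dual} --- it is imported from Lemma 3.1 of \cite{ShareDrink} (discrete-time version: Lemma 2.1 of \cite{infinite}) --- but the argument used there, and the one the paper rehearses when extending duality to hypermoves in Subsection \ref{macro}, is precisely the coefficient comparison you relegate to your final parenthesis: peel off the first move of $\phi$ (which is the \emph{last} move of $\overleftarrow{\phi}$), express $\eta_t(v)$ as a combination of the intermediate profile $\eta_1$ by the induction hypothesis for the shorter sequence, substitute $\eta_1$ in terms of $\eta_0$, and read off the coefficients. Your matrix formulation is a genuinely different and cleaner packaging of the same content: writing $\eta_t=M_t\cdots M_1\eta_0$ and using $M_k^{\,T}=M_k$ makes the time reversal an instance of $(M_t\cdots M_1)^T=M_1\cdots M_t$, and it extends verbatim to hypermoves, since the hypermove matrix (equal to $(1-2\mu_k)$ times the identity plus $2\mu_k/|A|$ times the all-ones block on $A$) is again symmetric and stochastic --- a case the paper has to re-derive by hand. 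What the inductive route buys in exchange is that it needs no linear-algebra formalism and is the form in which duality is actually invoked later (e.g.\ in the proof of Lemma \ref{simplif}). One small correction to your parenthesis: the induction does go through directly if one inducts by stripping the \emph{first} move of $\phi$ rather than the last, because then the stripped move is appended at the end of the reversed sequence and the SAD-process is simply run one step further; the only ``stronger hypothesis'' required is that the identity hold for arbitrary initial profiles, which is how the lemma is stated in the first place.
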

This is in essence Lemma 3.1 in \cite{ShareDrink}, just adapted to discrete time (see Lemma 2.1 in \cite{infinite}).
Next, let us for convenience also repeat Lemma 2.3 in \cite{infinite}, which is a collection of results about SAD-profiles extracted from $\cite{ShareDrink}$, here:

\begin{lemma}\label{collection}
	Consider the SAD-process on a graph $G$, started from vertex $v$, i.e.\ with initial profile $\delta_v$. Then the following holds:
	\begin{enumerate}[(a)]
		\item If $G$ is a path, all achievable SAD-profiles are unimodal.
		\item If $G$ is a path and $v$ only shares the water to one side, it will remain a mode of the SAD-profile.
		\item The supremum over all achievable SAD-profiles at another vertex $u$ equals $\tfrac{1}{d(u,v)+1}$,
		where $d(u,v)$ is the graph distance between $u$ and $v$.
	\end{enumerate}
\end{lemma}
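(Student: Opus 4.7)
Proof plan. I would tackle the three parts in sequence; part (a) furnishes the main tool for (b), and both feed into the path case of (c).

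For (a) an induction on the length of the move sequence suffices. The initial profile $\delta_v$ is trivially unimodal, and assuming $\eta_k$ is unimodal with peak interval $[p_-,p_+]$, a move on the path edge $\langle x,y\rangle$ (with $x$ immediately left of $y$) alters only the entries at $x$ and $y$; since $\mu\le\tfrac12$ the signed difference $\eta_k(x)-\eta_k(y)$ merely shrinks by the factor $1-2\mu\ge 0$ without changing sign. A brief case analysis on the position of $\langle x,y\rangle$ relative to $[p_-,p_+]$---entirely left, entirely right, or meeting the peak at an endpoint---shows that the inequalities linking the two altered entries to their outer neighbours still conform to unimodality; in the boundary case the peak may migrate one step away from the averaged edge, but no new local minimum can appear between adjacent vertices.

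For (b) I maintain inductively the invariant $\eta_k(v)\ge\eta_k(y)$, where $y$ is the unique neighbour of $v$ ever involved in a move with $v$. The opposite side of the path carries $0$ mass throughout, so (a) forces every mode to lie at $v$ or on the $y$-side, and hence at $v$ itself. The invariant is preserved by every move: on the edge $\langle v,y\rangle$ the difference $\eta(v)-\eta(y)$ rescales by the nonnegative factor $1-2\mu$; on any edge strictly on the $y$-side, $\eta_k(v)$ is unchanged, and unimodality with $\eta_k(v)\ge\eta_k(y)$ forces $\eta_k(z)\le\eta_k(y)$ for the far neighbour $z$ of $y$, so $\eta_{k+1}(y)\le\eta_k(y)\le\eta_k(v)$.

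For the lower bound in (c) I would restrict all moves to a fixed geodesic $v=v_0,\ldots,v_d=u$ of $G$ and iterate a ``sweep'' of full averages ($\mu=\tfrac12$) along its $d$ edges. The uniform profile of height $\tfrac{1}{d+1}$ on the geodesic is a common fixed point of every full average, and the composition of these averaging projections is a strict contraction on the orthogonal complement of the constants, so the profile converges to the uniform one and $\eta(u)$ can be pushed arbitrarily close to $\tfrac{1}{d+1}$. For the upper bound when $G$ is a path with $v$ an endpoint, (a) and (b) together give the chain $\eta_t(v_0)\ge\eta_t(v_1)\ge\cdots\ge\eta_t(v_d)$, whose sum is at most $1$, yielding $\eta_t(u)\le\tfrac{1}{d+1}$ directly. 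The main obstacle is the upper bound in (c) for a general graph $G$, because off-geodesic moves can indirectly reshuffle mass along the geodesic; my plan would be to first apply duality (Lemma \ref{dual}) to swap the roles of $u$ and $v$, and then introduce a suitable distance-weighted Lipschitz potential (or an explicit majorising coupling of the SAD on $G$ onto a path process) to argue that off-geodesic moves cannot push more mass into the target than corresponding path moves do---pinning down the right potential or coupling is the crux, and is essentially the content of the generalisation announced in \cite{SAD}.
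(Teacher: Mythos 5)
You should first note that the paper does not prove Lemma \ref{collection} at all: it is explicitly a quoted collection of results, with (a) and (b) taken from H\"aggstr\"om's sharing-a-drink paper \cite{ShareDrink}, and (c) proved there for paths only and extended to general graphs in \cite{SAD} (and to arbitrary $\mu_k$ in \cite{avg}). Measured against that, your sketches of (a) and (b) are essentially the standard inductive arguments and are sound: a move only shrinks the signed difference across one edge, so the case analysis for unimodality goes through, and your invariant $\eta_k(v)\ge\eta_k(y)$ in (b), combined with the observation that the untouched side of $v$ stays at $0$, is exactly what is needed. The lower bound in (c) via repeated sweeps along a geodesic is likewise the paper's own remark (it is Lemma \ref{evenout}).

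The genuine gap is the upper bound in (c). Your monotone chain $\eta_t(v_0)\ge\cdots\ge\eta_t(v_d)$ only holds when $v$ is an endpoint of the path, because only then does (b) apply and keep $v$ a mode; for an interior $v$ the mode can sit strictly between $v$ and $u$, the chain fails, and already this case requires the energy/induction argument of Thm.\ 2.3 in \cite{ShareDrink}, which you do not reproduce. For general graphs you correctly identify the crux but do not supply it: no distance-weighted potential is exhibited, and it is far from clear that one exists, since off-geodesic moves can concentrate mass in ways that defeat naive Lyapunov functions; also, duality does not help here, as the SAD-process is self-dual in the relevant sense and swapping $u$ and $v$ leaves you with the same problem. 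The actual proof in \cite{SAD} is a nontrivial combinatorial construction, not a potential or coupling argument. So your proposal establishes (a), (b) and the attainability half of (c); the remaining half is precisely the part the paper itself outsources to the literature, and deferring to \cite{SAD} is legitimate as a citation but does not constitute a proof.
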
	
In fact, part (a) holds more generally in the sense that unimodality is preserved on paths even for initial profiless other than $\delta_v$. 
Further, it is not hard to see that parts (a) and (b) fail if $G$ is not a path. Likewise, that the value proposed in (c) is within reach is rather obvious (especially with Lemma \ref{evenout} below): just share the drink along the shortest path from $v$ to $u$. The intricate part is to verify that you cannot do better. Indeed, part (c) was originally proved (as Thm.\ 2.3) in $\cite{ShareDrink}$ for paths only, but recently extended to general graphs for $\mu=\frac12$, using a clever combinatorial construction \cite{SAD}. The straight-forward generalization to arbitrary and different $\mu_k$ (based on Lemma \ref{simplif} below) can be found as Lemma 2.4 in \cite{avg}.

Before we turn to the task of raising water levels, let us provide one more auxiliary result, which is Lemma 2.2 in \cite{infinite} and immediately follows from the energy argument that was used in the proof of Thm.\ 2.3 in \cite{ShareDrink}:
\begin{lemma}\label{evenout}
Consider a graph $G=(V,E)$ with initial water profile $\{\eta_0(u)\}_{u\in V}$ and fix a set $A\subseteq V$
together with a collection $E_A\subseteq E$ of edges inside $A$ connecting all vertices in $A$.

Opening the pipes in $E_A$ -- and none connecting $A$ and $V\setminus A$ -- in repetitive sweeps for times long enough
such that $\mu_k\geq\epsilon$ for some fixed $\epsilon>0$ in each round (cf.\ (\ref{update})),
will make the water levels inside $A$ approach their average, i.e.\ $\eta_t(v)$
converges to $\tfrac{1}{|A|} \sum_{u\in A}\eta_0(u)$ for all $v\in A$.
Consequently, for any $u,v\in A$, the corresponding terminal value in $u$ of the dual SAD-process started with $\delta_v$ converges to $\xi_{u,v}=\tfrac{1}{|A|}$.
\end{lemma}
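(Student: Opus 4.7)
The plan is to use the classical energy argument, exploiting that since only edges inside $A$ are opened, the total amount of water in $A$ is conserved. Let $\bar\eta = \tfrac{1}{|A|}\sum_{u\in A}\eta_0(u)$; by conservation, $\bar\eta$ is also the average of $\eta_t$ over $A$ for every $t$. Define the energy
\[
\En(t) = \sum_{u\in A}\bigl(\eta_t(u)-\bar\eta\bigr)^2,
\]
and it suffices to show $\En(t)\to 0$, since this forces $\eta_t(v)\to\bar\eta$ for every $v\in A$.

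First I would verify, by a direct computation using $\eta_{t+1}(x)=(1-\mu)\eta_t(x)+\mu\eta_t(y)$ together with the analogue for $y$, that a single move $(e,\mu)$ with $e=\langle x,y\rangle\in E_A$ changes the energy by exactly $-2\mu(1-\mu)\bigl(\eta_t(y)-\eta_t(x)\bigr)^2$. Since $\mu\in[\epsilon,\tfrac12]$ implies $2\mu(1-\mu)\geq\mu\geq\epsilon$, this yields
\[
\En(t+1) \;\leq\; \En(t) - \epsilon\bigl(\eta_t(y)-\eta_t(x)\bigr)^2.
\]
In particular $\En(\cdot)$ is monotone non-increasing and bounded below by $0$, so it converges to some $\En_\infty\geq 0$.

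To rule out $\En_\infty > 0$ I would use a compactness argument on the sweeps. Let $F$ denote the continuous map sending a starting profile to the profile obtained after one complete sweep through $E_A$, and set $\Delta(\eta) := \En(\eta) - \En(F(\eta))\geq 0$. The crucial observation is that $\Delta(\eta) = 0$ forces $\eta$ to be constant on $A$: by the per-move formula, vanishing $\Delta$ means every opened edge in the sweep had equal endpoint values at the instant of opening, so each move left the profile unchanged and the profile stayed equal to $\eta$ throughout the sweep; therefore every edge $e\in E_A$ has equal endpoint values in $\eta$, and connectedness of $(A,E_A)$ gives $\eta\equiv\bar\eta$. By continuity of $\Delta$ and compactness of $K_\delta := \{\eta\in[0,c]^A:\sum_{u\in A}\eta(u)=|A|\bar\eta,\,\En(\eta)\geq\delta\}$ for any $\delta>0$, the function $\Delta$ attains a positive minimum $c_\delta>0$ on $K_\delta$. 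Hence as long as $\En\geq\delta$, every sweep drops the energy by at least $c_\delta$, so after finitely many sweeps $\En<\delta$; since $\delta>0$ was arbitrary, $\En_\infty=0$.

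The dual-SAD statement then follows by applying the first part with initial profile $\delta_v$: the total mass in $A$ is then $1$, so $\bar\eta=\tfrac{1}{|A|}$ and the terminal values converge to $\tfrac{1}{|A|}$ at every vertex of $A$. The only slightly delicate step is the characterization of the zero set of $\Delta$, which is why I set it up via the observation that vanishing $\Delta$ implies no values ever change during the sweep (rather than trying to track differences across an evolving sweep); everything else is a routine computation or a standard compactness bound.
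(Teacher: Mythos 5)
Your proof is correct and follows exactly the energy argument the paper relies on for this lemma (the paper gives no proof of its own, citing Lemma 2.2 of \cite{infinite} and the energy argument from the proof of Thm.\ 2.3 in \cite{ShareDrink}); the per-move decrement $-2\mu(1-\mu)\bigl(\eta_t(y)-\eta_t(x)\bigr)^2$, the bound $2\mu(1-\mu)\geq\epsilon$, and the compactness step are all right. One small point: since the edge ordering and the values $\mu_k\in[\epsilon,\tfrac12]$ may differ from sweep to sweep, $F$ is not a single fixed map, so you should take the minimum of $\Delta$ jointly over $K_\delta$ and the compact set of sweep parameters (finitely many orderings times $[\epsilon,\tfrac12]^{|E_A|}$), for which your characterization of the zero set applies verbatim.
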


%

\subsection{Meta-sequences and hypermoves}\label{macro}

We will first formally define
the concept of optimal move sequences and then carve out some of their properties.

\begin{definition}\label{opt}
Let $\phi\in (E\times[0,\tfrac12])^t$,
where $\phi_k=(e_k,\mu_k)$, be called an {\em optimal} (finite) move sequence if opening the pipes $e_1,\dots,e_t$ in chronological order, each for the period of time that corresponds
to $\mu_k$ in (\ref{update}), will lead to the terminal value $\eta_t(v)=\kappa(v)$.

If no finite optimal move sequence exists, let us call $\Phi=\{\phi^{(m)},\;m\in\N\}$ an {\em optimal meta-sequence of moves}, provided that $\phi^{(m)}\in (E\times[0,\tfrac12])^{t_m}$ is a finite move sequence for each $m\in\N$,
achieving $\eta_{t_m}(v)> \kappa(v)-\tfrac1m$, and the terminal SAD-profiles $\{\xi_{u,v}(t_m)\}_{u\in V}$ dual to $\phi^{(m)}$ converge pointwise to a limit $\{\xi_{u,v}\}_{u\in V}$ as $m\to\infty$.
\end{definition}

Observe that the restriction for the terminal profiles of the dual SAD-processes in an optimal meta-sequence to converge pointwise is more a technical one: Since $G$ is finite, a simple compactness argument (proceeding to subsequences of $\Phi$ with converging terminal dual SAD-profiles) guarantees that such exist. It simply excludes jumping back and forth between finite move sequences approximating (different) optimal limiting sequences.

\begin{lemma}\label{simplif}
	For any (finite) network $G=(V,E)$, target vertex $v$ and initial water profile $\eta_0$, there exists an optimal move \mbox{(meta-)sequence} $\Phi$ and without loss of generality we can assume all involved moves to be complete, i.e.\ $\mu_k=\tfrac12$ in all its moves.
\end{lemma}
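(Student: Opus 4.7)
The plan is to split the claim into (i) the existence of an optimal (meta-)sequence and (ii) the reduction to complete moves, and to handle both through a single key observation: once the edge sequence $(e_1,\dots,e_t)$ is fixed, the terminal value $\eta_t(v)$ depends \emph{multilinearly} on $(\mu_1,\dots,\mu_t)$.

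First I would verify this multilinearity. Reading off (\ref{update}), each update is of the form $\eta_{k+1} = (I+\mu_k N_{e_k})\,\eta_k$, where $N_{e_k}$ is a fixed linear operator on $\mathbb{R}^V$ encoding the averaging rule on the endpoints of $e_k$. Composing, $\eta_t = \prod_{k=t}^{1}(I+\mu_k N_{e_k})\,\eta_0$, so every component, and in particular $F(\mu_1,\dots,\mu_t) := \eta_t(v)$, is a polynomial on $[0,\tfrac12]^t$ that is \emph{linear in each $\mu_k$ separately}. A standard induction on $t$ (fix all but one coordinate, invoke linearity to push to an endpoint, repeat) shows that such a function attains its maximum on $[0,\tfrac12]^t$ at a vertex of the box. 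Hence, replacing each $\mu_k$ by a suitable element of $\{0,\tfrac12\}$ can only increase $\eta_t(v)$. Removing the slots with $\mu_k=0$ (which correspond to moves that do nothing) yields a strictly shorter move sequence on a subset of the same edges, using only $\mu=\tfrac12$, whose terminal value at $v$ is at least the original.

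Second, for existence, I would use the supremum definition of $\kappa(v)$ to pick any sequence of finite move sequences $\phi^{(m)}$ with $\eta^{(m)}_{t_m}(v) > \kappa(v)-\tfrac1m$, then apply the reduction above to each $\phi^{(m)}$ independently, so that from now on all $\mu$'s involved equal $\tfrac12$. If for some $m$ one already has $\eta^{(m)}_{t_m}(v)=\kappa(v)$, that $\phi^{(m)}$ is an optimal finite sequence and we are done. Otherwise, the dual SAD terminal profiles $\{\xi^{(m)}_{u,v}\}_{u\in V}$ all lie in the compact cube $[0,1]^V$, so by a diagonal/compactness argument I can pass to a subsequence along which they converge pointwise to some limit $\{\xi_{u,v}\}_{u\in V}$. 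This subsequence, still denoted $\{\phi^{(m)}\}$, satisfies all requirements of Definition \ref{opt} and uses complete moves throughout.

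The only real content is the multilinearity observation; once it is in place, both the ``complete-moves'' reduction and the passage from supremum to an optimal meta-sequence are essentially mechanical. The mildly subtle point to be careful about in the write-up is that the reduction is performed \emph{per fixed edge sequence}, so one does not need to compare sequences of different lengths, and the $\mu_k=0$ moves can be harmlessly excised after optimisation.
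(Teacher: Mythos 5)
Your proposal is correct and follows essentially the same route as the paper: the multilinearity of $\eta_t(v)$ in $(\mu_1,\dots,\mu_t)$ and the coordinate-by-coordinate push to a vertex of the box is exactly the paper's observation (via duality) that the terminal value is affine in each $\mu_k$ when that move is viewed as the first one applied to the intermediate profile, and the existence part is the same compactness/subsequence extraction of convergent dual SAD-profiles. The only cosmetic difference is the order of operations (you reduce to complete moves first and then extract the meta-sequence, the paper does the reverse), which changes nothing.
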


\begin{proof}
By the very definition of $\kappa(v)$, the existence of an optimal finite or meta-sequence of moves
is guaranteed: Let $A\subseteq[0,1]^V$ denote the set of achievable terminal profiles for SAD-processes started in $v$ (including finitely many moves).
Its closure $\overline{A}$ in 
$([0,1]^V, \lVert\,.\,\rVert_\infty)$ is bounded and therefore compact. Given the initial water profile
$\{\eta_0(u)\}_{u\in V}$, the function, which maps the profiles to the corresponding
terminal water level at $v$,
$$f:=\begin{cases}[0,1]^V\to[0,C]\\
                  \{\xi_{u,v}\}_{u\in V}\mapsto \sum_{u\in V}\limits\xi_{u,v}\,\eta_0(u)\end{cases}$$
is continuous. Hence there exists a non-empty closed subset $F$ of $\overline{A}$ on which $f$ achieves its
maximum $\kappa(v)$ over $\overline{A}$. The SAD-profiles dual to finite optimal move sequences are given by
$F\cap A$. If $F\cap A=\emptyset$, there exists a sequence $(\xi^{(m)})_{m\in\N}$ of elements in $A$, converging in
maximum norm to an element of $F$. For every profile $\xi^{(m)}$, there exists a finite move sequence
$\phi^{(m)}\in (E\times[0,\tfrac12])^{t_m}$ dual to the SAD-process generating $\xi^{(m)}$ as terminal profile.
These can be gathered to a collection of finite move sequences, $\Phi=\{\phi^{(m)},\;m\in\N\}$.
Without loss of generality, we can assume for all $m\in\N$, that $\phi^{(m)}$ achieves
$\eta_{t_m}(v)=f(\xi^{(m)})> \kappa(v)-\tfrac1m$ (by passing on to a subsequence if necessary).
This turns $\Phi$ into an optimal meta-sequence of moves.
                  
Assume now that the first move in a finite sequence $\phi\in (E\times[0,\tfrac12])^t$ is to open pipe
$e_1=\langle x,y\rangle$ for a time corresponding to $\mu_1\in[0,\tfrac12]$ in (\ref{update}).
Without loss of generality we can assume $\eta_0(x)\geq\eta_0(y)$. Let $\xi_{u,v}(t)$ be dual to $\phi$ (cf.\ Lemma \ref{dual}) and note that it follows from duality (the last move of the dual SAD-process corresponds to the first move of $\phi$):
\[\xi_{u,v}(t)=\begin{cases}
		\xi_{u,v}(t-1),& \text{for } u\in V\setminus\{x,y\}\\
		(1-\mu_1)\,\xi_{x,v}(t-1)+\mu_1\,\xi_{y,v}(t-1),& \text{for } u=x\\
		\mu_1\,\xi_{x,v}(t-1)+(1-\mu_1)\,\xi_{y,v}(t-1),& \text{for } u=y.
	\end{cases}\] 
Hence it holds (using Lemma \ref{dual})
\begin{align*}\eta_t(v)&=\sum_{u\in V}\xi_{u,v}(t)\,\eta_0(u)\\
	&=\mu_1\,\big(\xi_{y,v}(t-1)-\xi_{x,v}(t-1)\big)\,\big(\eta_0(x)-\eta_0(y)\big)+\sum_{u\in V}\xi_{u,v}(t-1)\,\eta_0(u)\end{align*}
and there are two cases to distinguish: either we have $\xi_{x,v}(t-1)\geq\xi_{y,v}(t-1)$ or $\xi_{x,v}(t-1)<\xi_{y,v}(t-1)$.
In the first case changing $\mu_1$ to $0$, i.e.\ erasing the first move will not decrease the water level
finally achieved at $v$. In the second case, the same holds for changing $\mu_1$ to
$\tfrac12$. Since we can consider any step in the move sequence to be the first one applied to the intermediate
water profile achieved so far, this establishes the claim for finite optimal move sequences.

As any finite move sequence can be simplified in this way without worsening its outcome, the argument applies
to the elements of a sequence of finite move sequences $\Phi=\{\phi^{(m)},\;m\in\N\}$ and thus to an optimal
meta-sequence as well.
\end{proof}

\vspace{1em}
It is tempting to assume that in the case when no finite optimal move sequence exists, we could get away with an
infinite move sequence instead of a sequence of finite move sequences $\Phi$ as described above. However, this is not the case, as the following example shows.

\begin{example}\label{seqofseq}
	\par\begingroup \rightskip15em\noindent Consider the path on four vertices, the target
	vertex not to be one of the end vertices and initial water levels as depicted to the right.
	\par\endgroup
	
	\vspace*{-1.7cm}
	\begin{figure}[H]
		\hspace{7.6cm} \includegraphics[scale=0.55]{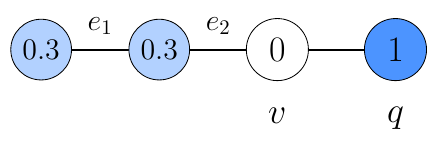}
	\end{figure}
	\vspace*{-0.45cm}
	
	As we will show in Example \ref{linegraph3}, the optimal SAD-profile will allocate $\tfrac16$ of the
	shared glass of water to each of the vertices to the left of $v$ and $v$ itself, the amount of
	$\tfrac12$ to the rightmost vertex $q$, showing that $\kappa(v)=0.6$: First, recall that any SAD-profile
	on a path is unimodal. If $q$ is not the (only)	mode, the contribution of $v$ to the terminal value at $v$ is at least as large as the one from $q$ and thus the SAD-profile in question can not yield
	a water level at $v$ of more than $0.5$, cf.\ \eqref{convcomb}. If $q$ is a mode, the SAD-profile is non-decreasing from
	left to right and thus a flat profile on the vertices other than $q$ uniquely optimal. Finally, to achieve the optimum, the
	contribution of $q$ has to be maximal, i.e.\ $\tfrac12$ (cf.\ Lemma \ref{collection}).
	
	From the considerations in Thm.\ 2.3 in \cite{ShareDrink} it is clear that this SAD-profile, more precisely the value $\tfrac12$
	at $q$, can only be established if the first move is $v$ sharing the drink with $q$ (which corresponds to the last move in the
	water transport -- see Lemma \ref{dual}). Once $v$ starts to share the drink to the left, any other interaction with $q$ will
	decrease the contribution of the latter and thus put a water level of $0.6$ at $v$ out of reach. 
	
	To get a flat profile on three vertices, we need however infinitely many single-edge moves (here on $e_1$ and $e_2$). An optimal meta-sequence of moves is for example given by
	\begin{eqnarray*}
		\Phi&=&\{\phi^{(m)},\;m\in\N\},\quad \text{where}\ \ \phi^{(m)}\in (E\times[0,\tfrac12])^{t_m},\ t_m=2m+1\ \ \text{and}\\
		\phi^{(m)}&=&\big(\underbrace{(e_2,\tfrac12),(e_1,\tfrac12),(e_2,\tfrac12), (e_1,\tfrac12),\dots}_{m \text{ pairs}},(\langle v,q\rangle,\tfrac12)\big),
	\end{eqnarray*}
	achieving $\eta_{t_m}(v)=0.6-\frac{0.1}{4^m}\to\kappa(v),$
	a value that can not be approached by any stand-alone (finite or) infinite sequence of moves.
\end{example}

When it comes to the opening and closing of pipes, it is not self-evident how far things change if we allow
pipes to be opened simultaneously. First of all one has to properly extend the model laid down in (\ref{update})
by specifying how the water levels behave when more than two locks are opened at the same time. In order to
keep things simple, let us assume that the pipes are all short enough and of sufficient diameter such that we can
neglect all kinds of flow effects. Moreover, let us take the dynamics to be as crude as can be by assuming that
the water levels of the involved barrels approach their common average in a linear and proportional fashion, which
is made more precise in the following definition.

\begin{definition}\label{macro_def}
Given a graph $G=(V,E)$, let $A\subseteq V$ be a set of at least 3 nodes and $E_A\subseteq E$ a set of edges spanning $A$. A {\em hypermove} on $E_A$ (or simply $A$) will denote the action of opening all pipes that
correspond to edges in $E_A$ in some round $k$ simultaneously and will -- analogously to (\ref{update}) -- change
the water levels for all vertices $u\in A$ to
$$\eta_k(u)=(1-2\mu_k)\,\eta_{k-1}(u)+2\mu_k\, \overline{\eta}_{k-1}(A),\quad \text{where }
\overline{\eta}_{k-1}(A)=\frac{1}{|A|}\,\sum_{w\in A}\eta_{k-1}(w)$$
is the average over the set $A$ after round $k-1$ and $\mu_k\in[0,\tfrac12]$.
\end{definition}

First of all, Lemma \ref{dual} transfers immediately and almost verbatim to move sequences including hypermoves:
In a move sequence with a hypermove on the set $A$ in the first round, we get the water levels
$$\eta_1(u)=\begin{cases}\eta_0(u)& \text{if }u\notin A\\
(1-2\mu_1)\,\eta_0(u)+2\mu_1\,\overline{\eta}_0(A)&\text{if }u\in A.
\end{cases}$$
If $\{\xi_{u,v}(t-1),\;u\in V\}$ and $\{\xi_{u,v}(t),\;u\in V\}$ are such that
$$\eta_t(v)=\sum_{u\in V}\xi_{u,v}(t)\,\eta_0(u)=\sum_{u\in V}\xi_{u,v}(t-1)\,\eta_1(u),$$
we find by comparing the coefficient of $\eta_0(u)$
$$\xi_{u,v}(t)=\begin{cases}\xi_{u,v}(t-1)& \text{if }u\notin A\\
(1-2\mu_1)\,\xi_{u,v}(t-1)+\sum_{w\in A} 2\mu_1\,\frac{\xi_{w,v}(t-1)}{|A|}&\text{if }u\in A,
\end{cases}$$
which is the SAD-profile originating from the very same hypermove applied to $\{\xi_{u,v}(t-1),\;u\in V\}$.
With this tool at hand, we can prove the following extension of Lemma \ref{simplif}:

\begin{lemma}\label{simplif2}
Take the network $G=(V,E)$ to be finite, and fix the target vertex $v$ as well as the initial water profile.
\begin{enumerate}[(a)]
 \item Even if we allow hypermoves, the statement of Lemma \ref{simplif} still holds true, i.e.\ reducing the range
       of $\mu_k$ from $[0,\tfrac12]$ to $\{0,\tfrac12\}$ in each round $k$ does not worsen the outcome of optimal
       move \mbox{(meta-)sequences}.
 \item The supremum $\kappa(v)$ of water levels achievable at a vertex $v$, as characterized in Definition
       \ref{kappa}, stays unchanged if we allow move sequences to include hypermoves.
 \end{enumerate}
\end{lemma}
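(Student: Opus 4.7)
The strategy is to lift the linearity-plus-duality argument from Lemma \ref{simplif} to prove part (a), and then to use part (a) together with Lemma \ref{evenout} to reduce hypermoves to single-edge sweeps in part (b).

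For part (a), I would exploit the dual update rule for a hypermove that was derived just before the lemma statement. With that formula, if the first step of a move sequence is a hypermove on $A$ with parameter $\mu_1\in[0,\tfrac12]$, each $\xi_{u,v}(t)$ is an affine function of $\mu_1$ (with all later moves held fixed), and therefore so is
\[\eta_t(v)=\sum_{u\in V}\xi_{u,v}(t)\,\eta_0(u)=\sum_{u\in V}\xi_{u,v}(t-1)\,\eta_0(u)+2\mu_1\sum_{u\in A}\bigl(\overline{\xi}_A(t-1)-\xi_{u,v}(t-1)\bigr)\,\eta_0(u).\]
A linear function on $[0,\tfrac12]$ attains its maximum at an endpoint, so we may replace $\mu_1$ by $0$ (erase the move) or by $\tfrac12$ (complete hypermove) without decreasing $\eta_t(v)$. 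The same argument applied to the single-edge case was already handled in Lemma \ref{simplif}. Since any index $k$ can be regarded as the first move applied to the intermediate profile $\eta_{k-1}$, iterating finishes finite sequences, and applying it member-wise finishes optimal meta-sequences.

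For part (b), by (a) it suffices to deal with move sequences whose hypermoves are all complete. Such a hypermove on $A$ acts on the current profile by replacing every value $\eta_{k-1}(u)$, $u\in A$, by the common average $\overline{\eta}_{k-1}(A)$. By Lemma \ref{evenout}, for any $\epsilon>0$ we can replicate this averaging to within $\epsilon$ in $\lVert\,\cdot\,\rVert_\infty$ using sufficiently many repetitive sweeps of single-edge moves along a spanning subset of $E_A$ (with $\mu=\tfrac12$). The plan is to perform such an $\epsilon$-replacement for every complete hypermove appearing in the given sequence.

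To control compounded error, I would use that each elementary update rule in (\ref{update}) and in Definition \ref{macro_def} is a convex combination, hence $1$-Lipschitz in $\lVert\,\cdot\,\rVert_\infty$; thus replacing a hypermove by an $\epsilon$-close sweep perturbs every subsequent profile -- and in particular $\eta_t(v)$ -- by at most $\epsilon$ per replacement. Choosing each sweep accurate to $\epsilon/T$ for a sequence of length $T$ then yields a pure single-edge move sequence whose outcome is within $\epsilon$ of the hypermove sequence's outcome. Applied to an optimal (meta-)sequence with hypermoves achieving values approaching the hypermove-supremum, this produces single-edge (meta-)sequences approaching the same value, so $\kappa(v)$ is unchanged. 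The main technical obstacle is precisely this error-propagation bookkeeping across a meta-sequence, together with passing to a subsequence to guarantee pointwise convergence of the resulting dual SAD-profiles as required by Definition \ref{opt}; both are handled by the non-expansiveness above and a standard compactness argument on $[0,1]^V$.
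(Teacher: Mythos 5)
Your proposal is correct and follows essentially the same route as the paper: part (a) rests on the observation that $\eta_t(v)$ is affine in $\mu_1$ (the paper phrases the endpoint choice via a comparison of the dual masses on the sets $A_a$ and $A_b$ of above-/below-average vertices, while your explicit formula with $\overline{\xi}_A(t-1)$ is an equivalent and arguably cleaner formulation), and part (b) replaces each complete hypermove by repeated single-edge sweeps via Lemma \ref{evenout}, controlling the accumulated error through the non-expansiveness of convex-combination updates (the paper allots $\epsilon/2^i$ to the $i$th hypermove where you allot $\epsilon/T$, which is immaterial). No gaps.
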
 
 
\begin{proof}
\begin{enumerate}[(a)]
	\item Just as in Lemma \ref{simplif}, we consider a move sequence consisting of finitely many (macro) moves 
	-- say again $t\in\N$ -- and especially the SAD-profile dual to the moves after round 1, denoted by 
	$\{\xi_{u,v}(t-1),\;u\in V\}$. If the first action is a hypermove on the set $A$, let us divide its nodes into
	two subsets according to whether their initial water level is above or below the initial average across $A$:
	$$A_a:=\{u\in A,\;\eta_0(u)\geq\overline{\eta}_0(A)\} \quad\text{and}\quad
    A_b:=\{u\in A,\;\eta_0(u)<\overline{\eta}_0(A)\}.$$
    If $\sum_{u\in A_a}\xi_{u,v}(t-1)\leq\sum_{u\in A_b}\xi_{u,v}(t-1)$, increasing $\mu_1$ to $\tfrac12$ will not
    decrease the final water level achieved at $v$. If instead $\sum_{u\in A_a}\xi_{u,v}(t-1)\geq\sum_{u\in A_b}\xi_{u,v}(t-1)$, the same conclusion holds for erasing the first move (i.e.\ setting $\mu_1=0$).
    
	\item Obviously, allowing for pipes to be opened simultaneously can, if anything, increase the maximal water level
	achievable at $v$. However, any such hypermove can be at least approximated by opening
	pipes one after another. Levelling out the water profile on a set of more than 2 vertices completely will correspond to the limit of infinitely many single pipe moves on the edges between them (in a sensible order).
	
	Let us consider a finite move sequence $\phi$, including hypermoves on the sets $A_1\,\dots,A_l$ (in chronological
	order). From part (a) we know that with regard to the final water level achievable at $v$, we can assume w.l.o.g.\
	that all moves are complete averages (i.e.\ $\mu_k=\tfrac12$ for all $k$). Fix $\epsilon>0$ and let us define a
	finite move sequence $\overline{\phi}$ including no hypermoves in the following way:
	We keep all the rounds in $\phi$ in which pipes are opened individually. For the hypermove on $A_i,\ i\in\{1,\dots,l\},$
	we insert a finite number of rounds in which the pipes of an edge set $E_{A_i}$, connecting $A_i$, are opened in
	repetitive sweeps such that the water level at each vertex $u\in A_i$ is less than $\tfrac{\epsilon}{2^i}$ away
	from the average across $A_i$ after these rounds (which is possible according to Lemma \ref{evenout}).
	
	As opening pipes leads to new water levels being convex combinations of the ones before, the differences of
	individual water levels	caused by replacing the hypermoves add up to a total difference of $\sum_{i=1}^l \tfrac{\epsilon}{2^i}<\epsilon$ in the worst case. Consequently, the final water level achieved at $v$ by $\overline{\phi}$ is at most
	$\epsilon$ less than the one achieved by $\phi$. Since $\epsilon>0$ was arbitrary, this proves the claim.
\end{enumerate}\vspace*{-0.35em}
\end{proof}

	Note, however, that the option of hypermoves can make a difference when it comes to the attainability of $\kappa(v)$, see Example \ref{seqofseq} and Theorem \ref{finitemacro} below.

\begin{remark}
	\begin{enumerate}[(a)]
		\item Lemma \ref{simplif2}\,(a) states that even for hypermoves, there is nothing to be gained by closing the pipes before the water levels have balanced out completely. A hypermove on the edge set $E_A$ with $\mu_k=\tfrac12$ can be seen as the limit of infinitely many single-edge moves on $E_A$ in the sense of Lemma \ref{evenout} -- a connection that does not exist for hypermoves with $\mu_k\in(0,\tfrac12)$. In fact, it is not hard to come up with an initial waterprofile on a path consisting of three nodes, where an incomplete hypermove, i.e.\ with $\mu_k\in(0,\tfrac12)$, can not be achieved or even approximated by single-edge moves.
		\item Due to Lemmas \ref{simplif} and \ref{simplif2} we can assume w.l.o.g.\ that the parameters $\mu_k$ in optimal move \mbox{(meta-)sequences} are always equal to $\tfrac12$ in each round, hence omit them and consider a move sequence to be a list of edges only (i.e.\ $\phi\in E^t$).
		We can incorporate a move sequence in which more than one pipe is opened at a time into Definition \ref{opt} by either allowing $\phi_k$, for $k\in\{0,\dots,t\}$, to be a subset of $E$ with more than one element on which the leveling takes place, or by viewing $\phi$ as a limiting case of move sequences $\{\phi^{(m)},\;m\in\N\}$, in which pipes are opened separately, that form a meta-sequence of moves $\Phi$
		-- as just described in the proof of the lemma.
		\end{enumerate}
\end{remark}

\section{Hypermoves turn supremum into maximum}\label{finite}

In order to increase the water level at the target vertex $v$, one could in principle start by
greedily trying to connect the barrels with the highest water levels to the one at $v$.
However, optimizing this strategy is far from trivial. 
If we allow hypermoves, we actually do not need to deal with infinite or meta-sequences of moves as the following
theorem shows.

\begin{theorem}\label{finitemacro}
For a finite graph $G=(V,E)$, an initial water profile $\{\eta_0(u)\}_{u\in V}$ and a fixed
target vertex $v\in V$, the supremum of attainable water levels $\kappa(v)$ can be achieved with a finite number of
hypermoves.
\end{theorem}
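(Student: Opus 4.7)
The plan is to combine Lemmas~\ref{simplif} and~\ref{simplif2} (providing an optimal meta-sequence of complete moves, with hypermoves allowed) with Lemma~\ref{evenout} (which turns an infinite sweep on a set $A$ into a single complete hypermove) in order to compress a (possibly infinite) optimal meta-sequence into finitely many hypermoves.

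Concretely, I would invoke Lemma~\ref{simplif} to start with an optimal meta-sequence $\Phi = \{\phi^{(m)}\}_{m \in \mathbb{N}}$ of pairwise complete moves ($\mu_k = \tfrac{1}{2}$). If the lengths $t_m$ are bounded along some subsequence, then some $\phi^{(m)}$ itself is a finite optimal sequence and we are done, so assume $t_m \to \infty$; the dual SAD profiles $\xi^{(m)}$ then converge pointwise to a limit $\xi$ with $\sum_u \xi_{u,v}\,\eta_0(u) = \kappa(v)$. By a diagonal/compactness argument over the finite edge set $E$, I pass to a subsequence so that for every fixed $k$ the first $k$ moves of $\phi^{(m)}$ (in chronological order) stabilize as $m \to \infty$. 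This yields a single infinite sequence of moves $\psi = (\psi_1, \psi_2, \dots)$ whose prefixes approximate $\Phi$ arbitrarily well and whose induced dual SAD profiles still converge to $\xi$.

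The key step is to partition $\psi$ into finitely many blocks, each executable as either a single pairwise move or a single complete hypermove. Looking at the vertices touched infinitely often by $\psi$, and iteratively peeling off vertices that are touched only finitely many times, one should obtain a finite prefix of ordinary moves followed by finitely many maximal runs of moves confined to some fixed vertex sets $A_1, \dots, A_l$. By Lemma~\ref{evenout}, each such infinite run acts on the current water profile exactly as a single complete hypermove on the corresponding $A_i$ (up to an inner limiting procedure that is harmless in view of Definition~\ref{macro_def}). Concatenating the finite prefix of ordinary moves and the finitely many hypermoves so obtained produces a finite hypermove sequence whose dual SAD profile equals $\xi$, and therefore achieves the water level $\kappa(v)$ at $v$.

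The main obstacle is justifying this block decomposition rigorously: a priori the tail of $\psi$ could toggle between several overlapping vertex sets in a complicated interleaved pattern, and a single application of Lemma~\ref{evenout} to such a tail need not yield a single hypermove. I would handle this by induction on the set of vertices whose dual SAD value has not yet stabilized at its limit in $\xi$: at each stage, the convergence $\xi^{(m)} \to \xi$ forces the infinitely-often-touched vertices in the remaining tail to be equalized on each connected component of the edges used unboundedly often, providing one further hypermove phase and strictly shrinking the \emph{not-yet-stabilized} set. Once this structural claim is in place, the remainder of the argument is a routine concatenation plus continuity of the linear functional $\xi \mapsto \sum_u \xi_{u,v}\,\eta_0(u)$.
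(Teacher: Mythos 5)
Your reduction of the optimal meta-sequence $\Phi=\{\phi^{(m)}\}$ to a single infinite move sequence $\psi$ by stabilizing chronological prefixes is the step that fails, and it fails for a reason the paper makes explicit: Example \ref{seqofseq} exhibits an instance (the path on four vertices) where $\kappa(v)$ \emph{cannot} be approached by any stand-alone infinite sequence of moves. There the optimal $\phi^{(m)}$ consists of $m$ alternating moves on $e_1,e_2$ followed by one final, indispensable move on $\langle v,q\rangle$; stabilizing prefixes yields $\psi=(e_2,e_1,e_2,e_1,\dots)$, and the final move sits at position $t_m\to\infty$, so it disappears from every fixed prefix of $\psi$. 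Hence the dual SAD profiles induced by the prefixes of $\psi$ do \emph{not} converge to $\xi$. (Note also that duality reverses time, so stabilizing the first $k$ moves of $\phi^{(m)}$ controls the \emph{last} $k$ moves of the dual SAD process, which is the wrong end to control for its terminal profile.) The obstacle you flag at the end -- overlapping interleaved tails -- is real but secondary; the primary gap is that the object $\psi$ has already lost the information needed to reach $\kappa(v)$.

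The paper's proof avoids exactly this trap by never collapsing $\Phi$ into one infinite sequence. It runs a divide-and-conquer thinning on the meta-sequence itself, counting \emph{sweeps} of the scope (coupon-collector passes through the set of edges used by the block): whenever the sweep count stays bounded along a subsequence, it extracts finitely many distinguished single-edge moves -- which may sit anywhere in $\phi^{(m)}$, in particular at the very end -- and recurses on the blocks in between, whose scope is strictly smaller, so the recursion halts. What remains are finitely many anchored single-edge moves separated by blocks that sweep their (connected components of) scope infinitely often, and only to those blocks is Lemma \ref{evenout} applied to produce hypermoves. To salvage your approach you would need an analogous device: a way of anchoring finitely many moves by their position relative to the structure of each $\phi^{(m)}$ (e.g.\ sweep completions), rather than by their chronological index.
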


\begin{proof}
Given a meta-sequence $\Phi=\{\phi^{(m)},\;m\in\mathbb{N}\}$  consisting of finite move sequences $\phi^{(m)}\in E^{t_m}$, let us define its {\em scope} $\mathcal{S}\subseteq E$ as the set of edges that eventually appear in the move sequences
$\phi^{(m)}$, i.e.\ $$\mathcal{S}(\Phi):=\{e\in E;\ \phi_k^{(m)}=e,\text{ for some }m\in\N,\ 1\leq k\leq t_m\}.$$
Further, for fixed $m\in\mathbb{N}$, let $\sigma_m$ denote the number of {\em sweeps} in $\phi^{(m)}$:
we move along the sequence $\phi^{(m)}$ like a coupon collector and a sweep is completed once every
edge of $\mathcal{S}(\Phi)$ appeared at least once. After each completion we start collecting all over
again.

It is worth to point out two simple facts here: On the one hand, that the scope of a sub-meta-sequence will be a subset of the scope of the original meta-sequence and that $\sigma_m=0$ for all $m\in\mathbb{N}$ is indeed possible; and on the other, that by definition, as long as the thinning of an optimal meta-sequence $\Phi$, in the sense that we remove a part of its elements, leaves us with an infinite meta-sequence $\Phi'\subseteq\Phi$, the latter is itself optimal.

To prove the claim, we are first going to choose an appropriate sub-meta-sequence
$\{\phi^{(m_n)},\;n\in\mathbb{N}\}$ of a given optimal meta-sequence $\Phi=\{\phi^{(m)},\;m\in\mathbb{N}\}$
and then show how it corresponds to a sequence of finitely many hypermoves in the limit
as $n\to\infty$. 

In order to arrive at a sub-meta-sequence that suits our purposes, we will gradually thin out $\Phi$,
giving rise to a finite chain $\Phi\supseteq\Phi_1\supseteq\Phi_2\supseteq...\supseteq\Phi_K$ of
sub-meta-sequences. Let $\N\supseteq \mathcal{N}_1 \supseteq \mathcal{N}_2 \supseteq\dots\supseteq\mathcal{N}_K$
denote the corresponding sets of indices of the finite move sequences retained, i.e.\
$\Phi_i=\{\phi^{(m)},\;m\in\mathcal{N}_i\}$.
We will conduct the thinning with help of a sequence $(L_i)_{i=1}^K$ of finite ordered lists of edges.
Each $L_i$ contains edges (potentially with multiplicity) and is adapted to the current sub-meta-sequence $\Phi_i$ in
such a way that $L_i$ is a subsequence of $\phi^{(m)}$ for all $m\in\mathcal{N}_i$. $L_{i+1}$ arises from $L_i$
by means of insertion of a finite number of edges between two already listed ones. To avoid ambiguities, it will
be specified explicitely which edges of each $\phi^{(m)}$ in $\Phi_i$ the edges listed in $L_i$
correspond to, or rather point at.

This is important for the following notion to be well-defined:
For two consecutive pointers $l_j$ and $l_{j+1}$ in $L_i$ and an element $\phi^{(m)}$ of $\Phi_i$, let
$I^{(m)}(l_j,l_{j+1})$ denote the sequence
of edges in $\phi^{(m)}$ that appear in between the edges corresponding to $l_j$ and $l_{j+1}$
respectively. For ease of notation, we add two symbols to the lists $L_i$ that do not correspond to an
edge: A first element $\ast$ and a last element $\dagger$. In other words, if $L_i$ comprises
$N$ edges, we fix the first and last element in $L_i$ to be $l_0=\ast$ and $l_{N+1}=\dagger$
respectively, and write $I^{(m)}(\ast,l_1)$ for the sequence of moves in $\phi^{(m)}$ before $l_1$ and
$I^{(m)}(l_N,\dagger)$ for the ones after $l_N$, see Figure \ref{pointers} for an illustration.

\begin{figure}[ht]
	\centering
	\includegraphics[scale=0.9]{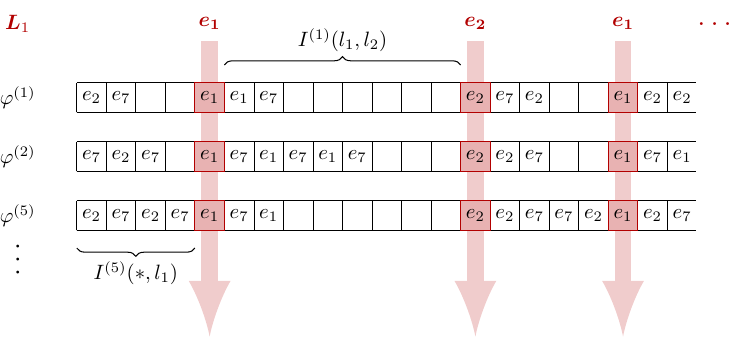}
	\caption{An illustration of the pointers from a list, say $L_1=(e_1,e_2,e_1,\dots)$ to the elements of a meta-sequence $\Phi_1$, with for instance $\mathcal{S}(\Phi_1)=\{e_1,e_2,e_7\}$, $\mathcal{N}_1=\{1,2,5,\dots\}$ and $\phi^{(1)}=(e_2,e_7,e_1,e_1,e_7, e_2, e_7, e_2, e_1, e_2,e_2,\dots)$ etc.\label{pointers}}
\end{figure}
In the process of gradually thinning $\Phi$, we follow an algorithmic divide-and-conquer
approach by passing from $\{\phi^{(m)},\;m\in\mathbb{N}\}$ over to the meta-sequences
$\{I^{(m)}(l_j,l_{j+1}),\;m\in\mathbb{N}\}$ which will be subdivided further whenever the list is extended.
Let us now describe an iteration step of the thinning: We start with
$L_0=(l_0,l_1):=(\ast,\dagger)$ and the meta-sequence
\begin{equation}
\Phi=\{\phi^{(m)},\;m\in\mathbb{N}\} = \{I^{(m)}(\ast,\dagger),\;m\in\mathbb{N}\}.
\end{equation}

If there exists a sub-meta-sequence on which the number of sweeps, i.e.\ $(\sigma_{m_n})_{n\in\mathbb{N}}$,
is bounded, there exists a further sub-meta-sequence on which this number is constant, having value
$M\in\mathbb{N}_0$ say. If $M=0$, we can choose a sub-meta-sequence $\Phi_1\subseteq\Phi$ with reduced scope,
i.e.\ $\mathcal{S}(\Phi_1)\subsetneq\mathcal{S}(\Phi)$. In the case $M\neq0$, there are only finitely many
possible ordered $M$-tuples of edges marking the completions of the sweeps, since $E$ is finite. Hence, we
can choose a sub-meta-sequence $\Phi_1=\{\phi^{(m)},\;m\in\mathcal{N}_1\}$ for which this $M$-tuple is identical and add these $M$ edges to our list
(in the same order and in between $\ast$ and $\dagger$) to get $L_1$, as illustrated in Figure \ref{pointers}. The chosen sub-meta-sequence $\Phi_1$
is divided according to $L_1$ into $M+1$ meta-sequences
$\{I^{(m)}(l_j,l_{j+1}),\;m\in\mathcal{N}_1\}$, $0\leq j\leq M$.
Note that the latter have a scope of cardinality at most $|\mathcal{S}(\Phi)|-1$, just
as with $\Phi_1$ chosen in the case $M=0$.

If $(\sigma_{m})_{m\in\mathbb{N}}$ has no bounded subsequence,
the algorithm terminates without making any changes to the list $L_0$ or the meta-sequence $\Phi$.

After this first iteration, the procedure is successively repeated with respect to some non-empty
$\{I^{(m)}(l_j,l_{j+1}),\;m\in\mathcal{N}_i\}$ in place of $\{I^{(m)}(\ast,\dagger),\;m\in\mathbb{N}\}$:
Say the current meta-sub-sequence is $\Phi_i=\{\phi^{(m)},\;m\in\mathcal{N}_i\}$. Choose $j$ such that
$\{I^{(m)}(l_j,l_{j+1}),\;m\in\mathcal{N}_i\}$ has non-empty scope. If $(\sigma_{m})_{m\in\mathcal{N}_i}$ -- now relating to the sweeps of $\{I^{(m)}(l_j,l_{j+1}),\;m\in\mathcal{N}_i\}$ of its scope
-- has a bounded subsequence, just as above, we can either reduce the scope directly or pick a finite tuple of
edges completing the sweeps, add them to $L_i$ in between $l_j$ and $l_{j+1}$ and apply the chosen thinning of
$\{I^{(m)}(l_j,l_{j+1}),\;m\in\mathcal{N}_i\}$ to $\Phi_i$.
If there is no such subsequence, the algorithm continues with the next pair of consecutive elements in $L_i$
and will never touch the pair $(l_j,l_{j+1})$ again in a future iteration. Due to the strictly decreasing cardinality
of the scope, this algorithmic process will halt after a finite number of iterations and we arrive at a
finite list $$L=(l_0,\dots,l_{N+1})=(\ast,l_1,\dots,l_N,\dagger)$$
as well as a sub-meta-sequence $\{\phi^{(m_n)},\;n\in\mathbb{N}\}$ of
$\Phi$ with the property that the edges $(l_1,\dots,l_N)$ form a subsequence of $\phi^{(m_n)}$ for all
$n$. In addition to that, for every $0\leq j\leq N$, the meta-sequence $\{I^{(m_n)}(l_j,l_{j+1}),\;n\in\N\}$ either has scope $\emptyset$, or the number of times its elements sweep its scope tends to infinity as $n\to \infty$.

The way how to proceed from this point onwards should be rather obvious with Lemma \ref{evenout} in mind.
Let us define the following finite sequence of hypermoves: Keep the single-edge moves $(l_1,\dots,l_N)$
and for $0\leq j\leq N$, put in between $l_j$ and $l_{j+1}$ (in an arbitrary order) hypermoves on the connected components
formed by the scope of $\{I^{(m_n)}(l_j,l_{j+1}),\;n\in\mathbb{N}\}$. By Lemma \ref{evenout}, we can
conclude that the SAD-profile dual to $\phi^{(m_n)}$ converges to the SAD-profile dual to the finite
sequence of hypermoves just described as $n\to\infty$, which concludes the proof.
\end{proof}\vspace{1em}

Theorem \ref{finitemacro} can be seen as a compactness result for the set of achievable water profiles. In contrast
to this constructive, algorithmic approach, the authors in \cite{SAD} independently found a different proof using tools from functional analysis (Thm.\ 4.4).
It is not hard to see that the claim fails for infinite graphs, where even an infinite sequence of hypermoves might not be sufficient to attain the value $\kappa(v)$ at $v$ (cf.\ the
proof of Thm.\ 3.5 in \cite{infinite}). Further, coming back to the case of finite graphs, Theorem \ref{finitemacro}
implies that there always exists an optimal SAD-profile (or limit of SAD-profiles) featuring rational
values only. The fact that any finite number of single-edge moves cannot level out a set, comprising more than two
nodes with different initial water levels, implies that finite optimal move sequences (in the sense of
Definition \ref{opt}) only exist, if there is an optimal meta-sequence for which the algorithm described
in the proof above halts with meta-sequences $\{I^{(m_n)}(l_j,l_{j+1}),\;n\in\mathbb{N}\}$, whose elements
are either all empty or longer and longer strings repeating the same edge (which of course can be contracted to a single move on this edge). In general, however, this is
not the case, as can be seen from the fairly simple Examples \ref{seqofseq} and \ref{linegraph}.

In what follows, we will go back to the initial setting, in which pipes are opened one at a time, but
have in mind (and frequently mention) which hypermove sequence the respective optimal meta-sequence under
consideration corresponds to.

\section{Algorithmic considerations}\label{complexity}

In view of the algorithmic complexity of the problem, it is worthwhile to address the design of
approximative algorithms based on heuristic approaches, even if they might not come arbitrarily close to $\kappa(v)$.

\subsection{Heuristics}\label{optimization}
A somewhat simpler problem, related to the water transport idea, is the concept of {\em greedy lattice
animals} as introduced by Cox, Gandolfi, Griffin and Kesten \cite{GLA1}. They consider the vertices of
a given graph $G$ to be associated with an i.i.d.\ sequence of non-negative random variables and define
a greedy lattice animal of size $n$ to be a connected subset of $n$ vertices containing the target
vertex $v$ and maximizing the sum over the associated $n$ random variables. Since we do not care about
the size of the lattice animal, let us slightly change this definition:

\begin{definition}\label{GLAdef}
	For a fixed (finite) graph $G=(V,E)$, target vertex $v$ and water levels $\{\eta(u)\}_{u\in V}$, let us call
	$C\subseteq V$ a {\em lattice animal (LA)} for $v$ if $C$ is connected and contains $v$. $C$ is a
	{\em greedy lattice animal (GLA)} for $v$ if it maximizes the average of water levels over such sets.
	This average will be considered its value
	$$\mathrm{GLA}(v):=\max_{C\text{ LA for }v}\frac{1}{|C|}\sum_{u\in C}\eta(u).$$
\end{definition}

By Lemma \ref{evenout}, it is clear that $\mathrm{GLA}(v)\leq\kappa(v)$. In fact, for the majority of
settings -- consisting of a graph $G$, a target vertex $v$ and an initial water profile
$\{\eta_0(u)\}_{u\in V}$ -- strict inequality holds and we can do better than just pooling the amount
of water collected in an appropriately chosen connected set of barrels including the one at $v$ (as we
already have seen in Example \ref{seqofseq}).

However, we know from Lemma \ref{simplif2} and Theorem \ref{finitemacro} that the last move of
every finite hypermove sequence achieving water level $\kappa(v)$ will be to pool the amount of water
allocated in a connected set of vertices including $v$. This greedy lattice animal for $v$ (in the
intermediate water profile created up to that point in time) can have a much bigger value than the one
in the initial water profile if we apply the following improving steps first:\\[0.5em]
	\noindent
	\textbf{1) Improving bottlenecks}\\
	Let us call a vertex $u$ a {\em bottleneck} of the GLA $C$ for $v$ if $u\in C\setminus\{v\}$ and
	$\eta(u)<\mathrm{GLA}(v)$.
	Clearly, each bottleneck $u$ has to be a cut vertex for $C$ (otherwise we could just remove it to
	improve the GLA). If there exists a connected subset of vertices $C_u$ including $u$ which has a higher
	average water level than $C_u\cap C$, the value of the GLA for $v$ is improved if the water collected in
	$C_u$ is pooled first (see Figure \ref{GLA1} below). Note that $C_u$ might involve more vertices from $C$ than
	just $u$ (cf.\ Example \ref{linegraph3}).\\[0.5em]
	\noindent
	\textbf{2) Enlargement}\\
	The second option to raise the value of the GLA $C$ for $v$ is to apply the idea above to a vertex $u$ in
	the vertex boundary of $C$ in order for the original GLA to be enlarged to a set of vertices in which
	$u$ is a bottleneck. 

	For this to be beneficial, there has to exist a connected set of vertices $C_u$ in
	$V\setminus C$ including $u$ with the following property:
	The average water level in $C_u$ is smaller than $\mathrm{GLA}(v)$ -- otherwise it would be part of $C$ --
	but is raised above this value after improving the potential bottleneck $u$ using water located in
	$V\setminus C$ (see Figure \ref{GLA1}).
	\begin{figure}[h]
		\centering
		\includegraphics[scale=0.9]{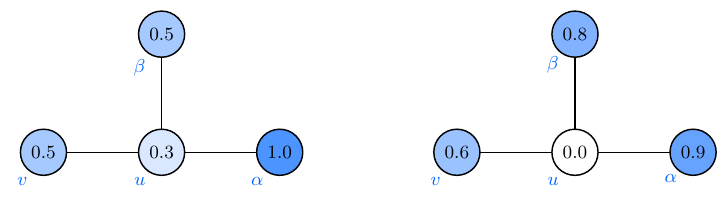}
		\caption{The GLA for target vertex $v$ on the left is $\{v,u,\alpha\}$ with value $\mathrm{GLA}(v)=0.6$, and the bottleneck $u$ can be improved by first opening the pipe $\langle u,\beta\rangle$.\newline     
		The GLA for $v$ with respect to the water profile on the right is $\{v\}$, but can be enlarged
		to $\{v,u,\alpha\}$ if the potential bottleneck $u$ is improved by opening the pipe
		$\langle u,\beta\rangle$ first. \label{GLA1}}
	\end{figure}

	\noindent
	\textbf{3) Choosing the optimal chronological order}\\
	When applying the improving steps just described, it is critical to choose the optimal chronological
	order of moves. Besides the fact that improving bottlenecks and enlarging the GLA has to be done
	before the final averaging, situations can arise in which different sets of vertices can improve the
	same bottleneck or the other way around that more than one bottleneck can be improved using non-disjoint
	sets of vertices, see the illustrative instances depicted in Figure \ref{GLA2}.
	\begin{figure}[H]
		\centering
		\includegraphics[scale=0.9]{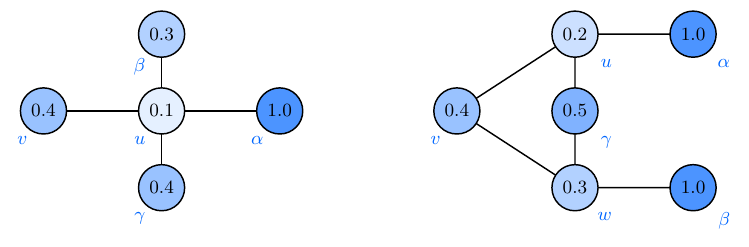}
		\caption{The GLA for target vertex $v$ on the left is $\{v,u,\alpha\}$ with value $\mathrm{GLA}(v)=0.5$.
			Improving the bottleneck $u$ can be done using either $\beta$ or $\gamma$ and is most effective if the pipe $\langle u,\beta\rangle$ is opened first, then $\langle u,\gamma\rangle$.\newline     
			The GLA for $v$ with respect to the graph on the right is $C=\{v,u,w,\alpha,\beta\}$ with value $\mathrm{GLA}(v)=0.58$. The water from $\gamma$ can be used to improve both bottlenecks $u$ and $w$.	
			It is optimal to open pipe $\langle w,\gamma\rangle$ first and then $\langle u,\gamma\rangle$, raising the average water level in $C$ to 0.62.\label{GLA2}}
	\end{figure}

It is worth noticing that lattice animals with lower average than $\text{GLA}(v)$ in the initial
water profile sometimes can be improved by the techniques just described to finally outperform the initial
GLA and all its possible improvements and enlargements (see Example \ref{linegraph3}, especially Figure
\ref{15-line2}).

In fact, for the instance on the right in Figure \ref{GLA2}, it holds $\kappa(v)=\frac{41}{60}\approx 0.683$, achieved by the hypermove sequence $\phi=(\langle w,\gamma\rangle,\langle u,\gamma\rangle,\{u,v,w,\alpha\}, \{v,w,\beta\})$.

\subsection{Heuristics can be far from optimal}
Having this heuristic toolbox at hand, it is possible to devise ad-hoc algorithms that deliver
possibly suboptimal solutions to a given water transport instance (e.g.\ by detecting (near-)optimal
lattice animals and check possible improvements/enlargements). Although both the valid lower bound
of $\text{GLA}(v)$ and common algorithmic designs like divide-and-conquer may at first sight appear to be
promising at least for simple structures, already for networks as simple as trees we cannot hope for
approximation guarantees, as the following two examples show.

\begin{example}\label{starex}
	\begin{figure}[!b]
		\centering
		\includegraphics[scale=0.82]{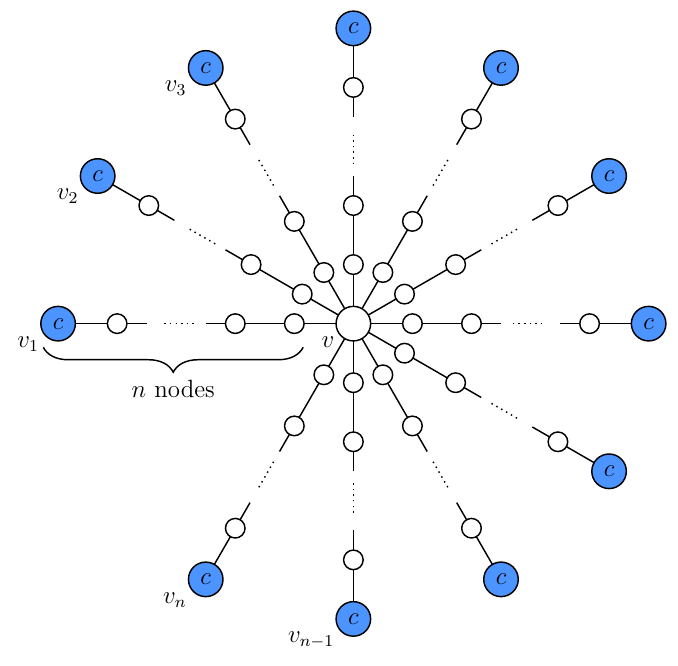}
		\caption{In a star graph, one can get $\kappa(v)\gg\text{GLA}(v)$. \label{star}}
	\end{figure}\noindent
	In order to verify that the value $\text{GLA}(v)$ does not necessarily give a useful approximation on
	$\kappa(v)$, we can consider a symmetric star graph on $n^2+1$ vertices, with $n$ rays and the target
	vertex $v$ at its center. All nodes but the leaves correspond to empty barrels. The leaves are labelled
	$v_1$ through $v_n$ and each of them has the same initial water level $c$, see Figure \ref{star}.

	It is easy to check that, for $c=n+\log(n)+1$ and $n$ large, $\text{GLA}(v)=\tfrac{n\,c}{n^2+1}\approx 1$.
	For this instance, we can however devise a water transport strategy showing $\kappa(v)\geq\log(n)\gg 1$.
	In order to do so, let us define, for $2\leq k\leq n$, $B_k$ to be the minimal connected superset of $\{v_1,\dots,v_k\}$ and $A_k:=B_k\setminus\{v_1,\dots,v_{k-1}\}$, as well as $A_1$ to be the path from $v$ to $v_1$. Note that for all $k$, the set $A_k$ contains exactly one leaf and $|A_k|=k\,(n-1)+2$.
	
	If we follow the strategy to pool the water in the sets $A_n, A_{n-1},\dots,A_1$ in this chronological
	order (for simplicity reasons consider hypermoves), we arrive at the following: As long as the water
	level at $v$, and hence all other vertices in $A_k\setminus\{v_k\}\subseteq A_{k+1}$ is below $\log(n)$,
	the hypermove on $A_k$ will distribute the excess of water from $v_k$ (which is at least $n+1$) and add
	an amount to $v$ bounded from below by $\frac{1}{|A_k|}(c-\log(n))\geq \tfrac{1}{k}$. Hence, if we perform
	all $n$ rounds, the water level at $v$ will be raised to at least $\sum_{k=1}^n\frac1k\geq\log(n+1)$.
\end{example}
\noindent
\begin{minipage}{0.55\textwidth}
	\begin{example}
		The water transport instance depicted to the right reveals that tackling the optimization problem
		by breaking the underlying network into smaller pieces can be quite far from optimal as well. Such a
		divide-and-conquer approach may seem especially tempting on trees, but even there does not
	    work well in general. We consider the tree to the right to be rooted at $v$. Solving
		first the water transport problem for the subtree rooted at $u$, we would pool with the right
		branch first, then $u_1$ and get a water level of roughly $n+\frac12$. Using this water (or even
		the cumulated amount of the pair $\{u,u_1\}$) to raise the level at $v$ will result in something
		of order $\frac1n$, while the strategy consisting of two hypermoves, namely pooling the water
		along the path connecting $v$ to $u_2$ first, then along the one connecting it to $u_1$, shows
		$\kappa(v)\geq\frac{n^5+2n^4+2n^3}{n^5+n^4+n^3+n^2}>1.$
		
		The fact that the global network structure is rather essential in a water transport instance,
		makes the optimization problem in general unamenable to standard implementation schemes, that
		are based on the idea of divide-and-conquer, such as dynamic programming for example.
	\end{example}
	\end{minipage}\hspace*{0.4cm}
	\begin{minipage}{0.4\textwidth}
		\begin{figure}[H]
			\includegraphics[scale=0.9]{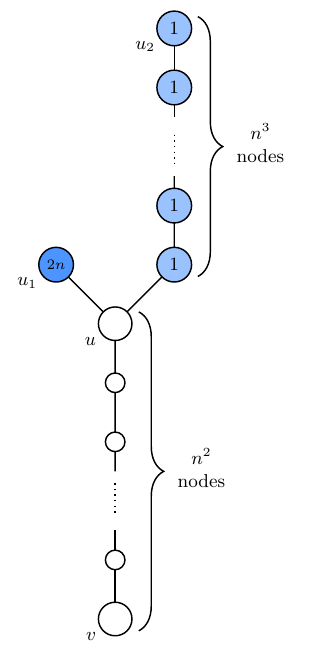}
			\caption{A simple tree not lending itself to a divide-and-conquer approach.}
		\end{figure}
	\end{minipage}

\subsection{Complexity of the problem}\label{complex}

In this subsection, we want to address the complexity of water transport on finite graphs. In fact, we
are going to show that the task of determining whether $\kappa(v)$ is larger than a given
constant -- for a generic instance, consisting of a graph, target vertex and initial water profile -- is
an NP-hard problem. This is done by establishing the following theorem:

\begin{theorem}\label{NPhard}
	The NP-complete problem (exact) 3-SAT can be polynomially reduced to the decision problem of whether
	$\kappa(v)> \lambda$ or not, for a suitably chosen water transport instance and constant $\lambda>0$.
\end{theorem}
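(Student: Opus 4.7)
My plan is to exhibit a polynomial-time reduction that maps a 3-SAT instance $\Psi$ with variables $x_1,\dots,x_n$ and clauses $C_1,\dots,C_m$ to a water transport instance $(G_\Psi,\eta_0,v,\lambda)$ of size polynomial in $n+m$, such that $\Psi$ is satisfiable if and only if $\kappa(v)>\lambda$. Since Lemma \ref{simplif2}(a) reduces the search space to complete hypermoves and Theorem \ref{finitemacro} guarantees that $\kappa(v)$ is attained by a finite hypermove sequence, the verification of correctness becomes essentially combinatorial rather than analytical.

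For each variable $x_i$, I would construct a \emph{variable gadget} containing two distinguished literal vertices $t_i$ (representing $x_i$) and $f_i$ (representing $\neg x_i$), tied together through a shared auxiliary sub-structure whose barrels carry very little water. The design principle is that any hypermove sequence attempting to route water to $v$ through both $t_i$ and $f_i$ must mix in a substantial portion of the low-level shared vertices, so that by the irreversibility of averaging (as illustrated in Example \ref{seqofseq}) the net contribution is strictly dominated by committing to only one side of the pair. Up to optimality, the gadget therefore forces a binary choice, namely a truth assignment to $x_i$.

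For each clause $C_j=\ell_{j,1}\lor\ell_{j,2}\lor\ell_{j,3}$, I would attach a \emph{clause gadget} consisting of a high-water clause vertex $c_j$ linked to $v$ by three short conduits, each one passing through the literal vertex corresponding to one of $\ell_{j,1},\ell_{j,2},\ell_{j,3}$ in the appropriate variable gadget. Water levels and pipe layouts are calibrated so that the water stored at $c_j$ can be profitably pooled into $v$ precisely when the hypermove sequence uses at least one of the committed literal vertices that makes $C_j$ true, with a parasitic penalty from the low-level auxiliary structure whenever a wrongly-signed literal is invoked. All clause and variable gadgets are attached to the common target $v$ so that their contributions combine additively via a final pooling hypermove in the spirit of the heuristic analysis in Subsection \ref{optimization}.

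The main obstacle is proving that no ``mixed'' strategy --- exploiting partial interference between gadgets through a clever staging of hypermoves --- can circumvent the encoding. Here I would establish a rigidity lemma: any optimal hypermove sequence for the combined graph must, after the preliminary improvement steps discussed in Section \ref{optimization}, conclude with a final pooling hypermove whose connected pooling set respects the variable/clause gadget structure, i.e.\ contains at most one of $\{t_i,f_i\}$ per variable and is able to reach $c_j$ only through a satisfied literal. Setting $\lambda$ to a rational threshold that separates, by a $1/\mathrm{poly}(n,m)$ gap, the level achievable by a satisfying assignment from the level achievable under any inconsistent or unsatisfying choice would then complete the reduction. Given the subtleties exhibited by the examples in Section \ref{complexity} --- where lattice animals with initially small averages can be promoted to outperform the initial greedy lattice animal --- the delicate part is precisely this rigidity argument, and I expect that making it watertight requires tuning the gadget parameters so that the penalty incurred by any ``cheating'' mixed strategy is uniformly bounded away from zero.
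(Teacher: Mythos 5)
Your overall strategy coincides with the paper's: a gadget-per-variable construction that forces a binary choice by making it unprofitable to route water through both literals, clause components whose contribution to $v$ is gated by a satisfied literal, and a final pooling move compared against a threshold. The paper realizes this with a comb: each variable is a path (``tooth'') on $12n^4$ vertices carrying $8n^4$ units of water in its middle third, with the two literal vertices at the ends; the clauses sit as triples of empty vertices in series along a shaft ending at $v$, joined to the literal vertices by paths of length $2n^2$; and the threshold is the clean constant $\lambda=2$.

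The genuine gap is that your proposal stops exactly where the proof begins. The ``rigidity lemma'' you defer is the entire content of the soundness direction (Proposition \ref{decision}(b) in the paper), and it cannot be obtained by calibration hand-waving: the paper's own Section \ref{complexity} shows that lattice animals with small initial averages can be promoted in unexpected ways, so one must \emph{quantitatively} exclude every cross-gadget routing. The paper does this with specific numerology and specific tools: because a tooth is a \emph{path}, unimodality of SAD-profiles (Lemma \ref{collection}(a)) caps what a tooth can deliver to a clause vertex at level $1$, and serving both ends of a tooth forces one of them below $\tfrac23$ since $12n^4$ vertices must then share $8n^4$ units; the length-$2n^2$ conduits cap any shaft vertex's contribution to a literal vertex at $\tfrac{1}{4n^2+1}$ (Lemma \ref{collection}(c)); and the shaft's water ($n$ barrels at level $3$ plus one at $\tfrac{13}{3}$) is tuned so that missing a single clause pins the final average at exactly $2$. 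Your gadget sketch provides none of these handles: the variable gadget's ``shared auxiliary sub-structure'' is not specified to be a path, so the unimodality argument is unavailable; the ``high-water clause vertex $c_j$ linked to $v$ by three short conduits'' risks letting $c_j$'s water reach $v$ regardless of the assignment (short conduits give large per-vertex contributions), whereas the paper deliberately makes clause vertices \emph{empty} and forces the final pooling set to traverse one per clause; and the claim that gadget contributions ``combine additively'' is precisely what must be proved, not assumed. Without concrete parameters and the accounting that shows unsatisfiability caps the total water in any candidate pooling set at $2|C|$, the reduction is not established.
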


Before we deal with the design of an appropriate water transport instance in which to embed the
satisfiability problem {\em exact 3-SAT}, let us provide the definition of Boolean satisfiability problems as
well as known facts about their complexity, for the sake of keeping this part self-contained.

\begin{definition}
	Let $X=\{x_1,x_2,\dots, x_k\}$ denote a set of {\em Boolean variables}, i.e.\ taking on logic truth
	values `TRUE' ($T$) and `FALSE' ($F$). For a variable $x$ in $X$, $x$ and $\overline{x}$ are called
	{\em literals} over $X$. A {\em truth assignment} for $X$ is a function $\tau:X\to\{T, F\}$, where
	$\tau(x)=T$ means that the variable $x$ is set to `TRUE' and $\tau(x)=F$ means that $x$ is set to `FALSE'.
	The literal $x$ is true under $\tau$ if and only if $\tau(x)=T$, its counterpart $\overline{x}$ is true under $\tau$ if and
	only if $\tau(x)=F$.
	
	A {\em clause} $C$ over $X$ is a disjunction of literals and {\em satisfied} by $\tau$ if at least one
	of its literals is true under $\tau$. A logic formula $F$ is in {\em conjunctive normal form (CNF)}, if
	it is the conjunction of (finitely many) clauses. It is called {\em satisfiable} if there exists a
	truth assignment $\tau$ such that all its clauses are satisfied under $\tau$.
	
	The standard Boolean satisfiability problem (often denoted by {\em SAT}) is to decide whether a
	given formula in CNF is satisfiable or not. If we restrict to the case where all the clauses in the
	formula consist of at most 3 literals, it is called {\em 3-SAT}. In {\em exact 3-SAT} each clause has
	to consist of exactly 3 (distinct) literals.
\end{definition}

3-SAT was among the first computational problems shown to be NP-com\-plete, a result published in a
pioneering article by Cook in 1971, see Thm.\ 2 in \cite{3SAT}. It is not hard to see that adding dummy
variables and 7 extra clauses (which enforce the value `FALSE' on the dummy variables) turns any 3-SAT formula into an equisatisfiable exact 3-SAT formula.

Let us now turn to the task of embedding an exact 3-SAT problem into a suitably designed water transport problem, which in size is polynomial in $n$, the number of clauses of the given satisfiability problem:

Given the logic formula $F=C_1\land C_2\land\ldots\land C_n$ in which each of the clauses $C_i$
consists of 3 distinct literals, let us define the comb-like graph depicted in Figure
\ref{reduction}. All the white nodes (including the target vertex $v$) represent empty barrels. The other ones, shaded in blue, contain water to the amount specified.

\begin{figure}[H]
	\hspace{-0.1cm}\includegraphics[scale=0.43]{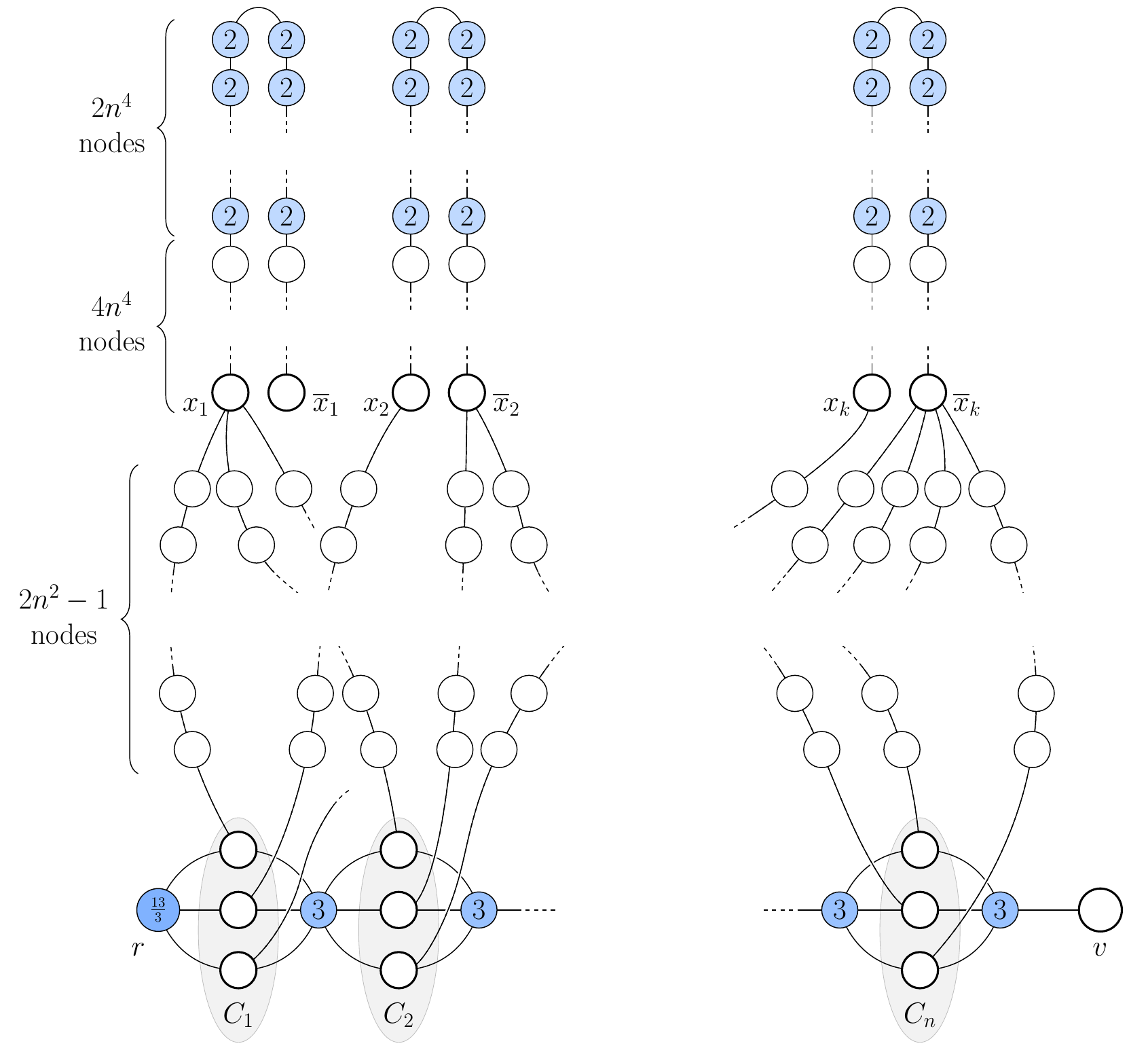}
	\caption{A tailor-made water transport instance on a suitable network graph, comprising $12kn^4+6n^3+n+2$ vertices, to encode a given exact 3-SAT instance.\label{reduction}}
\end{figure}

The comb has $k$ teeth, where $k$ is the number of variables appearing in $F$. Each individual tooth
is formed by a path on $12n^4$ vertices with water level $2$ in each vertex in the middle third of the path.
The lower endvertices of the $i$th tooth are representing the literals $x_i$ and $\overline{x}_i$

The comb's shaft is made up of $4n+2$ vertices, with the target vertex $v$ to the very
right. To the left of $v$ there are $n+1$ vertices representing non-empty barrels: $n$ with water level $3$ as well as vertex $r$ on the opposite end of the shaft with initial water level $\eta_0(r)=\frac{13}{3}$. Any consecutive pair of these vertices are separated by (and in parallel connected by an edge to) three empty barrels representing the literals appearing in the clauses $C_1,\dots,C_n$. These are connected by (disjoint) paths of length $2n^2$ to the matching literal vertex at the bottom end of the teeth.

Observe that the number of nodes in the shaft (clauses) a vertex representing a literal is connected to via a direct path, can vary between $0$ and $n$. For this water transport problem originating from the exact 3-SAT formula $F$ as in
Figure \ref{reduction}, we claim the following:

\begin{proposition}\label{decision}
	Consider the water transport problem based on the logical formula $F$, given by the graph, target
	vertex and initial water profile as depicted in Figure \ref{reduction}.    
	\begin{enumerate}[(a)]
		\item If $F$ is satisfiable, then the water level at $v$ can be raised to a value strictly
		larger than $2$, i.e.\ $\kappa(v)>2$.
		\item If $F$ is not satisfiable, then this is impossible, i.e.\ $\kappa(v)\leq2$.
	\end{enumerate}	
\end{proposition}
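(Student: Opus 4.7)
For part (a), I would construct an explicit finite hypermove sequence of the kind guaranteed by Theorem \ref{finitemacro}. Given a satisfying assignment $\tau$, for each clause $C_j$ pick a literal $L_j \in C_j$ true under $\tau$, let $i_j$ denote its variable, and write $L_i$ for the side of tooth $i$ (either the $x_i$- or $\overline{x}_i$-endpoint) that matches $\tau$. First, for each variable $x_i$ that is used, apply one hypermove on the set $S_i$ consisting of the middle third of tooth $i$, the half of tooth $i$ running from that middle down to the $L_i$-endpoint, and every connecting path going out of $L_i$ to the shaft-literal vertex $\ell_{j,L_i}$ for each clause $C_j$ containing $L_i$ (with those shaft literals included). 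Writing $m_i\le n$ for the number of clauses containing $L_i$, the set $S_i$ has $4n^4$ vertices of water $2$ (the middle third) and $4n^4+2m_in^2$ empty vertices (bottom half, $m_i$ path interiors, and $m_i$ shaft literals), so the hypermove raises every $\ell_{j,L_j}\in S_i$ to water $\tfrac{4n^2}{4n^2+m_i}\ge\tfrac{4n}{4n+1}$. A final hypermove on the connected set $T=\{v,V_1,\dots,V_n,r\}\cup\{\ell_{j,L_j}\}_{j=1}^n$ of size $2n+2$ then yields
\[
\eta_t(v)\;\ge\;\frac{3n+\tfrac{13}{3}+n\cdot\tfrac{4n}{4n+1}}{2n+2}\;=\;2+\frac{1}{6(4n+1)}\;>\;2.
\]

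For part (b), I would prove the contrapositive: $\kappa(v)>2$ implies $F$ is satisfiable. By Theorem \ref{finitemacro} and Lemma \ref{dual}, some finite hypermove sequence realises the supremum, and its dual SAD profile $\{\xi_{u,v}\}$ from $v$ satisfies $\eta_t(v)=3\sum_j\xi_{V_j}+\tfrac{13}{3}\xi_r+2\sum_{u\text{ middle}}\xi_u>2$. Combined with $\sum_u\xi_u=1$ this rewrites as
\[
\sum_j\xi_{V_j}+\tfrac{7}{3}\xi_r\;>\;2\sum_{u\text{ empty}}\xi_u.
\]
For each variable $x_i$, set $\tau(x_i)=T$ if the SAD weight on the $x_i$-half of tooth $i$ together with the path interiors emanating from its $x_i$-endpoint exceeds the analogous weight on the $\overline{x}_i$-side, and $\tau(x_i)=F$ otherwise. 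I would argue that the induced $\tau$ must satisfy $F$ via two bounds: first, for each clause gap $j$, Lemma \ref{collection}(c) together with a cut-type inequality along the shaft controls $\xi_r$ and each $\xi_{V_{j'}}$ with $j'<j$ by a linear combination of $\sum_s\xi_{\ell_{j,s}}$; second, weight on $\ell_{j,s}$ can be ``sourced'' from tooth $i$'s middle (where $s$ is a side of $x_i$) without incurring a dominant empty-vertex toll iff $s$ agrees with $\tau(x_i)$, since otherwise the route from middle to $\ell_{j,s}$ must traverse $4n^4$ empty vertices of the wrong half-tooth and $2n^2-1$ path interiors. If no clause had a literal with side matching $\tau$, summing these tolls over the $n$ clauses would force the empty-weight term to dominate, contradicting the strict inequality above.

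The main obstacle will be making the second (tooth-side) bound quantitative. A tooth whose literal appears in many clauses can ``bridge'' around a given clause gap via the long connecting paths, so the precise bookkeeping has to allocate the empty-vertex penalty on both halves of each tooth and each path interior, and then compare it clause-by-clause to the shaft bonus of at most $\sum_j\xi_{V_j}+\tfrac{7}{3}\xi_r$. The polynomial sizes $12n^4$ and $2n^2$, and the seemingly peculiar value $\tfrac{13}{3}$ at $r$, appear calibrated so that this accounting leaves exactly a slack of order $1/n$ when a satisfying assignment exists (matching the $\tfrac{1}{6(4n+1)}$ gap exhibited in part (a)) and no slack at all when $F$ is unsatisfiable.
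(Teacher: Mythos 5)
Your part (a) is correct and is essentially the paper's own construction: the same $k$ disjoint stars (the watered middle third of a tooth, its empty lower third on the true-literal side, and the connecting paths to the matching clause-vertices), followed by the same final average over the $(2n+2)$-vertex shaft path through one satisfied clause-vertex per clause and through $r$. Your explicit bound $2+\tfrac{1}{6(4n+1)}$ checks out and sharpens the paper's ``beyond $2$''.

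Part (b) is where the difficulty lies, and your proposal does not establish it. You correctly reduce $\kappa(v)>2$ to $\sum_j\xi_{V_j}+\tfrac73\xi_r>2\sum_{u\,\text{empty}}\xi_u$, but the mechanism you propose for refuting this when $F$ is unsatisfiable --- an asymmetric ``toll'' incurred only when weight is routed to a clause-vertex $\ell_{j,s}$ whose side $s$ disagrees with your majority-weight assignment $\tau$ --- does not exist. The route from the watered middle of tooth $i$ to $\ell_{j,s}$ traverses $4n^4$ empty lower-tooth vertices and a length-$2n^2$ connecting path \emph{whichever} side $s$ is; a $49.9\%$ minority side pays no larger toll and can deliver essentially as much as the majority side, so the induced $\tau$ has no quantitative teeth. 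The actual obstruction, which you flag as ``the main obstacle'' but never supply, is a per-tooth budget: a tooth is a path on $12n^4$ vertices carrying only $8n^4$ units of water, so by unimodality of profiles on paths (cf.\ Lemma \ref{collection}) it cannot simultaneously push clause-vertices reached through \emph{both} of its endpoints to level near $1$ --- one side is capped near $\tfrac{8n^4}{12n^4}=\tfrac23$. This is precisely what forces an optimal strategy to commit to one endpoint per tooth, i.e.\ to a truth assignment.

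For comparison, the paper does not run a global dual-weight budget at all: it takes a minimal finite optimal hypermove sequence (Theorem \ref{finitemacro}), notes that its last move averages over a connected set $C\ni v$ whose other members are either above level $2$ or cut-vertices, and shows $C$ must be the shaft path $r,\ell_1,W_1,\dots,\ell_n,W_n,v$ containing one clause-vertex per clause. Unsatisfiability then forces some tooth to feed clause-vertices on both of its sides, so the water in $C$ is at most $3n+(n-1)+\tfrac23+\tfrac{13}{3}=4(n+1)=2|C|$, whence $\kappa(v)\le2$. If you wish to salvage your dual accounting, the per-edge toll must be replaced by a per-tooth conservation constraint of this kind; as written, your argument for (b) has a genuine gap at its central step.
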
	

Before we deal with the proof of the proposition, note how it implies the statement of Theorem
\ref{NPhard}: First of all, if $F$ is a 3-SAT formula consisting of $n$ clauses, $k$ cannot exceed
$3\,n$. Given this, it is not hard to check that the graph in Figure \ref{reduction} has $k\cdot 12n^4+3n\cdot2n^2+n+2\leq45n^5$ vertices and maximal degree at most $\max\{6,n+1\}$. As the
initial water levels are all in $\{0,2,3,\tfrac{13}{3}\}$, the size of this water transport instance
is clearly polynomial in $n$. Due to the fact that the value of $\kappa(v)$ can be used to decide
whether the given formula $F$ is satisfiable or not -- as claimed by Proposition \ref{decision} --
Theorem \ref{NPhard} follows.
\vspace*{1em}

\begin{nproof}{of Proposition \ref{decision}}
	\begin{enumerate}[(a)]
	\item 
	To prove the first part of the proposition, let us assume that $F$ is satisfiable. Then there
	exists a truth assignment $t$ with the property that all clauses $C_1,\dots,C_n$ contain at
	least one of the $k$ literals that are set true by $t$. Those can be used to let the water
	trickle down from the teeth to the shaft in an effective way:	
	We assign each clause to one of the true literals under $t$ which it contains. Then, we
	average the water over $k$ (disjoint) star-shaped trees. Each such tree has a literal
	$x\in\{x_1,\overline{x_1},\dots,x_k,\overline{x_k}\}$, which is true under $t$, at its center. Included in the star with center $x$ is the path connecting all nodes with water from the tooth above it, as well as all nodes in the clauses assigned to $x$ (together with the paths of length $2n^2$ connecting them to $x$). If $m$ clauses are assigned to $x$, there are $8\,n^4+2mn^2$ vertices in this star and the accumulated amount of water is $8\,n^4$.
	
	By pooling the water within these $k$ stars, the included nodes in the clauses can simultaneously be pushed
	to a water level as close to the average of the corresponding star as we like (see Lemma \ref{evenout}).
	As $0\leq m\leq n$, these averages are bounded from below by
	$$\frac{8\,n^4}{8\,n^4+2mn^2}=\frac{4\,n^2}{4\,n^2+m}\geq1-\frac{m}{4\,n^2}\geq 1-\frac{1}{4\,n}.$$
	So after this procedure, in each clause there will be one node with water level strictly larger than
	$1-\tfrac{1}{3n}$.
	
	By another complete averaging -- this time over a path on $2n+2$ vertices in the shaft (i.e.\ the nodes
	at the bottom in Figure \ref{reduction}), connecting the vertex with initial water level $\frac{13}{3}$ to $v$ through the clause-vertices assigned to stars in the previous step -- will push the water level at $v$ beyond
	$$\tfrac{1}{2n+2}\,\Big(3n+n\,(1-\tfrac{1}{3n})+\tfrac{13}{3}\Big)=2.$$
	Consequently, for the case of satisfiable $F$, we verified for the graph depicted in Figure
	\ref{reduction} that $\kappa(v)>2$.
	\item To verify the second claim, namely that $\kappa(v)\leq2$ for unsatisfiable $F$, let us start with the observation that no vertex in the paths forming the teeth can achieve a water level higher than 2. Indeed, even if the water from the teeth is used to first fill paths to the shaft to level almost 1 (by an argument similar to Lemma \ref{collection} the resulting profiles are unimodal with the mode in the tooth), the water in the shaft is simply not enough to extend a level of 2 more than a few vertices into the connecting paths (between clause-vertices and literal-vertices).
	
	Let us assume for contradiction that $\kappa(v)>2$. Then there exists a finite sequence of hypermoves $\phi\in (2^V)^t$ realizing a water level exceeding 2 at the target vertex $v$. Let this sequence be chosen both minimal (in the number $t$ of hypermoves) and optimal (maximizing the terminal value $\eta_t(v)$ over all sequences with this number of moves). Then the last move necessarily has to be the (complete) average over a set $C$ including $v$. By choice of $\phi$ all vertices $u\in C\setminus\{v\}$ either have water level $\eta_{t-1}(u)>2$ or are cut-vertices of $C$ (bottlenecks). As a consequence, $C$ can only contain vertices in the shaft: The few vertices in the connecting paths, which potentially could attain a water level higher than 2, would have to be reached via the same way the water exceeding level 1 got there, hence keeping this water in the shaft would in fact decrease the number of steps $t$ necessary -- and potentially also increase $\eta_t(v)$. By a similar argument, at most one out of the three vertices per clause is included in $C$.
	
	Since using water from the teeth only cannot raise the level in a clause-vertex higher than 1 (again unimodularity on paths), for the average accross $C$ at time $t-1$ to be at least 2, vertex $r$ has to be included: If it was not, say $C$ included only $m<n$ clause-vertices, we could potentially use water from $r$ or vertices separating clauses to increase the amount of water in $C$. However, if its clause-barrels are filled through shaft vertices, it would in fact be better to include even these water dispensers into $C$. If they are filled via connecting paths, the contribution of single vertices is bounded from above by $\frac{1}{4n^2+1}$ (part (c) of Lemma \ref{collection}).
	Thus, having $n$ vertices with initial water level $3$, the cummulated water transfered to clause-vertices in $C$ (beyond a water level of 1) is bounded from above by $(2n+\frac{10}{3})\cdot\frac{m}{4n^2+1}$, which is less than 1 (since $m\leq n-1$). This excess water is however not enough as $C$ then includes $2m+1$ vertices and the total amount of water is at most
	$3m+m+1$. By the same logic, accessing target vertex $v$ before the final move actually does not increase the amount of water in $C$.
	
	This leaves one final question to be settled: Can all $n$ clause-vertices in $C$ be pushed to have a water level close enough to 1, when $F$ is not satisfiable and the strategy described in part (a) fails?
	
	If the water from tooth $i$ is used to fill clause-vertices connected to both $x_i$ and $\overline{x}_i$ (not touching the vertices separating clauses), at least one of them cannot exceed water level $\frac23$ (as including the whole path forming tooth $i$ pushes the number of involved vertices beyond $12n^4$, with still only $8n^4$ accumulated entities of water available). Since using the water from a tooth only cannot raise the level in any clause-vertex higher than 1 (as mentioned above), the total amount of water in $C$ is hence bounded from above by $3n+(n-1)+\frac23+\frac{13}{3}=4(n+1)$ which is exactly $2|C|$.
	If the water level in vertices in the shaft is raised by moves including vertices separating clauses, the total amount of water in $C$ will be unchanged or even decrease. Consequently, our initial assumption that we can create a GLA for $v$ with average larger than 2 is false if $F$ is unsatisfiable and selecting one of each pair of literals does not cover all clauses. This concludes the proof.
	\vspace*{-1em}
\end{enumerate}\vspace*{-1em}
\end{nproof}

As set out above, this shows that being able to calculate $\kappa(v)$ -- to the extent whether or not it exceeds 2 -- for the comb-like graph depicted in Figure \ref{reduction} also solves the corresponding exact 3-SAT problem, which the graph was based on. Since (exact) 3-SAT is NP-complete, we hereby
established that any problem in NP can be polynomially reduced to a decision problem minor
to the computation of $\kappa(v)$ in a suitable water transport instance -- showing that
computing $\kappa(v)$ in general is indeed an NP-hard problem.

\subsection{Tractable graphs}\label{tractable}

In this subsection, we are going to present some simple finite graphs (namely paths and complete
graphs) for which the optimization problem of water transport is solvable in polynomial time, irrespectively
of the initial profile and chosen target vertex.

\begin{example}[Path of length 2]\label{K2}
	The minimal graph which is non-trivial with respect to water transport is a single edge, in other words
	the complete graph on two vertices:
	$$G=K_2=(\{1,2\},\{\langle1,2\rangle\}).$$
	From Lemma \ref{collection}, we can conclude
	\begin{equation}\label{edgekappa}
	\kappa(1)=\begin{cases}\eta_0(1)&\text{if }\eta_0(1)\geq\eta_0(2)\\
	\tfrac{\eta_0(1)+\eta_0(2)}{2}&\text{if }\eta_0(1)<\eta_0(2).
	\end{cases}
	\end{equation}
\end{example}

\begin{example}[Path of length 3]\label{linegraph}
	\par\begingroup \rightskip14em\noindent
	The simplest non-transitive graph (i.e.\ having vertices of different kind)
	is the path on three vertices:
	\par\endgroup
	
	\vspace*{-1.5cm}
	\begin{figure}[H]
		\flushright \includegraphics[scale=0.9]{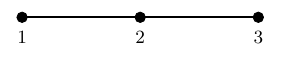}
	\end{figure}
	\vspace*{-0.5cm}
	
	$$G=(\{1,2,3\},\{\langle1,2\rangle,\langle2,3\rangle\}).$$
	Using all three parts of Lemma \ref{collection}, we find the supremum of achievable water levels at
	vertex $1$ to be
	\begin{equation*}
	\kappa(1)=\max\Big\{\eta_0(1),\tfrac{\eta_0(1)+\eta_0(2)}{2},\tfrac{\eta_0(1)+\eta_0(2)+\eta_0(3)}{3}\Big\},
	\end{equation*}
	which is obviously achieved by a properly chosen greedy lattice animal.
	
	Consider the case in which the initial water levels satisfy
	\begin{equation}\label{nofin}
	\eta_0(3)\geq\eta_0(2)\geq\eta_0(1)\quad \text{and}\quad \eta_0(3)>\eta_0(1)
	\end{equation}
	Then $\kappa(1)=\tfrac{\eta_0(1)+\eta_0(2)+\eta_0(3)}{3}$ and there exists no finite optimal move sequence.
	This can be seen from the fact that any single move will preserve the inequalities in (\ref{nofin}) and thus
	we have $\eta_t(1)<\kappa(1)<\eta_t(3)$ for all finite move sequences $\phi\in E^t$.
	
	The maximal value achievable by greedy lattice animals at vertex 2 is
	\begin{equation*} \text{GLA}(2)=\max\Big\{\eta_0(2),\tfrac{\eta_0(1)+\eta_0(2)}{2},\tfrac{\eta_0(2)+\eta_0(3)}{2},
	\tfrac{\eta_0(1)+\eta_0(2)+\eta_0(3)}{3}\Big\}. \end{equation*}
	The fact that we can average across one pipe at a time and choose the order of updates allows us to improve over
	this and gives
	\begin{equation}\label{kappa2} \kappa(2)=\max\Big\{\text{GLA}(2),\tfrac12\,\big(\eta_0(1)+\tfrac{\eta_0(2)+\eta_0(3)}{2}\big),
	\tfrac12\,\big(\eta_0(3)+\tfrac{\eta_0(1)+\eta_0(2)}{2}\big)\Big\}. \end{equation}
	To see this, we can take a closer look on the SAD-profiles that can be created by updates along the two
	edges $\langle1,2\rangle$ and $\langle2,3\rangle$ starting from the initial profile $\delta_2=(0,1,0)$: After one
	update -- depending on the chosen edge -- the profile is given by $(\tfrac12,\tfrac12,0)$ or $(0,\tfrac12,\tfrac12)$.
	After a potential second step, we end up with either $(\tfrac12,\tfrac14,\tfrac14)$ or $(\tfrac14,\tfrac14,\tfrac12)$.
	All of the corresponding convex combinations appear on the right hand side of (\ref{kappa2}). By Lemma \ref{evenout},
	we know that continuing like this will finally result in the limiting profile $(\tfrac13,\tfrac13,\tfrac13)$.
	It is not hard to check that any sequence of two or more updates will lead to an SAD-profile of type either
	$(x,\tfrac{1-x}{2},\tfrac{1-x}{2})$ or $(\tfrac{1-x}{2},\tfrac{1-x}{2},x)$, with $x\in[\tfrac14,\tfrac12]$.
	Hence, it can be written as a convex combination of either $(\tfrac12,\tfrac14,\tfrac14)$ 
	and $(0,\tfrac12,\tfrac12)$ or $(\tfrac14,\tfrac14,\tfrac12)$ and $(\tfrac12,\tfrac12,0)$. Consequently, it cannot
	correspond to a final water level at vertex 2 exceeding the value in (\ref{kappa2}).

	In fact, when maximizing the water level for the middle vertex we can neglect the option of leveling out
	the profile completely, since for any initial water profile there is a {\em finite} optimal move sequence $\phi\in E^t$
	achieving $$\eta_t(2)\geq\tfrac13\,\big(\eta_0(1)+\eta_0(2)+\eta_0(3)\big),$$
	as the next example will show.
\end{example}

\begin{example}[Complete graph]\label{complete}
	Given an initial water profile $\{\eta_0(u)\}_{u\in V}$ and the complete graph $K_n$ as underlying network,
	we get for any $v\in V$:
	$$\kappa(v)=2^{-l+1}\,\eta_0(v)+\sum_{i=1}^{l-1} 2^{-i}\;\eta_0(v_i),$$
	where $V$ is ordered such that $\eta_0(v_1)\geq\eta_0(v_2)\geq\dots\geq\eta_0(v_n)$ with $v=v_l$. Furthermore,
	this optimal value can be achieved by a finite move sequence.
	
	To see this is not hard having Lemmas \ref{dual} and \ref{collection} in mind.
	If $v=v_1$, the highest water level is already in $v$ and the best strategy is to stay away from the pipes.
	For $v\neq v_1$, the contribution of vertex $v_1$ -- i.e.\ the share $\xi_{v_1,v}(t)$ in the convex combination of
	$\{\eta_0(u)\}_{u\in V}$ optimizing $\eta_t(v)$, see (\ref{convcomb}) -- can not be more than $\tfrac12$ by part (c) of Lemma \ref{collection}. However, this can be achieved by opening the pipe
	$\langle v,v_1\rangle$.
	According to the duality between water transport and SAD, this is what we do last. The argument just used
	can be iterated for the remaining share of $\tfrac12$ giving that $v_2$ can contibute at most $\tfrac14$
	(given that $v_1$ contributes most possible) and so on.
	Obviously, involving vertices holding water levels below $\eta_0(v)$ can not be beneficial, as all vertices are
	directly connected, so we do not have intermediate vertices being potential bottlenecks.
	
	The optimal finite move sequence $\phi\in E^t$, where $t=l-1$, is then given by 
	$$\phi_k=\langle v,v_{l-k}\rangle,\ k=1,\dots,l-1$$
	leading to 
	$$\eta_k(v)=2^{-k}\,\eta_0(v)+\sum_{i=1}^{k} 2^{-k+i-1}\;\eta_0(v_{l-i})$$
	and consequently $\eta_t(v)=\eta_{l-1}(v)=\kappa(v)$. Note that the option to open several pipes simultaneously
	is useless on the complete graph. Furthermore, the optimal move sequence $\phi$ only includes edges to which $v$ is incident, so the very same reasoning holds for the center $v$ of a star graph with diameter 2 (in particular a path on 3 vertices) as well.
	
	To determine the optimal achievable value at $v$ we have to sort the $n$ initial water levels first. 
	This can be done using the deterministic sorting algorithm `heapsort' which makes $\text{O}(n\,\log(n))$
	comparisons in the worst case. The calculation of $\kappa(v)$ given the sorted list of
	initial water levels needs at most $n-1$ additions and $n-1$ divisions by $2$.
\end{example}

\begin{example}[Path of length $n$, target vertex $v$ an end node]\label{linegraph2}
	Expanding Example \ref{linegraph}, let us reconsider a finite path -- this time not on three
	but $n$ vertices. Let the vertices be labelled $1$ through $n$ and let vertex $1$ (sitting at one end of the path) be
	the target vertex. Given an initial water profile $\{\eta_0(i)\}_{i=1}^n$, $\kappa(1)$ can be determined by $2n-2$
	arithmetic operations ($n-1$ additions, $n-1$ divisions) as it turns out to be
	\begin{equation}\label{kappa_line}\kappa(1)=\max_{1\leq k\leq n}\frac1k \sum_{i=1}^k \eta_0(i).\end{equation}
	In other words, $\kappa(1)$ equals GLA(1), with respect to the initial water profile (see Definition \ref{GLAdef}).
	
	This easily follows from Lemma \ref{collection}, as any achievable SAD-profile $\{\xi_{u,v}(t)\}_{u=1}^n$
	will be non-increasing in $u$. Hence the water level at $1$ will always be a convex combination of averages
	over its $n$ lattice animals and thus bounded from above by the right hand side of \eqref{kappa_line}.
	This value in turn can be at least approximated by averaging over a greedy lattice animal for vertex $1$ in
	the sense of Lemma \ref{evenout}.

	If we allow hypermoves (opening several pipes simultaneously), it is optimal to do just one move, namely
	to open the pipes $\langle1,2\rangle,\dots,\langle l-1,l\rangle$ simultaneously, where $l\in\{1,\dots,n\}$ is chosen such that $\{1,\dots,l\}$ is a GLA for vertex $1$.
\end{example}

\begin{example}[Path of length $n$, general target vertex]\label{linegraph3}
	Finally, let us consider the path on $n$ vertices, with the target vertex $v$ not (necessarily) sitting at one end.
	\begin{figure}[H]
		\centering
		\includegraphics[scale=0.8]{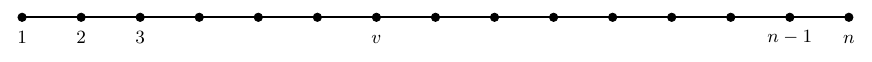}
	\end{figure}
	\noindent
	Given the initial water levels $\{\eta_0(u),\;1\leq u\leq n\}$, let us consider the final SAD-profile
	$\{\xi_{u,v}\}_{1\leq u\leq n}$ corresponding to an optimal move \mbox{(meta-)sequence} (for a meta-sequence,
	it is the limit of its dual SAD-profiles we are talking about, cf. Lemmas \ref{dual} and \ref{simplif}) so that
	\[\kappa(v)=\eta(v)=\sum_{u=1}^n \xi_{u,v}\eta_0(u).\]
	First of all, from Lemma \ref{collection}\,(a) we know that any achievable SAD-profile on a path
	is unimodal (which therefore holds for a pointwise limit of SAD-profiles as well). Let us denote the leftmost
	maximizer of $\{\xi_{u,v}\}_{1\leq u\leq n}$ by $q$ and set
	$$l:=\min\{1\leq u\leq n,\;\xi_{u,v}>0\}\quad\text{and} \quad r:=\max\{1\leq u\leq n,\;\xi_{u,v}>0\}.$$
	Without loss of generality we can assume $l\leq v\leq q\leq r$ (for $q<v$, the set-up simply has to be mirrored), and further that $\{\xi_{u,v}\}_{1\leq u\leq n}$ is chosen to minimize the number $r-l+1$ of involved vertices.
	
	Since $0<\xi_{l,v}\leq\xi_{l+1,v}\leq\xi_{q-1,v}$ and $\xi_{q,v}\geq\xi_{q+1,v}\geq\xi_{r,v}>0$, the contributions of both the vertices $\{1,\dots,q-1\}$ and $\{q,q+1,\dots,n\}$ respectively can be seen as a scaled-down version of the problem treated in the previous example: This time, the drink to be shared does not amount to 1 but to $\sum_{l\leq u\leq q-1}\xi_{u,v}$ resp.\ $\sum_{q\leq u\leq r}\xi_{u,v}$ instead. From Example \ref{linegraph2} we can therefore conclude that a piecewise flat profile i.e.\ 
	\begin{equation}\label{flatend}
	\xi_{l,v}=\xi_{l+1,v}=\ldots=\xi_{q-1,v}\quad \text{and}\quad\xi_{q,v}=\xi_{q+1,v}=\ldots=\xi_{r,v}
	\end{equation}
	is optimal (as a profile of this form is obviously achievable from $\{\xi_{u,v}\}_{1\leq u\leq n}$), see Figure \ref{SADprof} below for an illustration.
		\begin{figure}[H]
		\centering
		\includegraphics[scale=0.8]{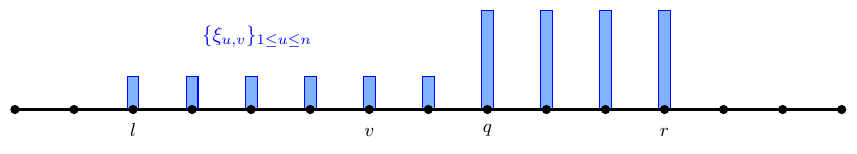}
		\caption{A generic SAD-profile corresponding to an optimal move meta-sequence on a path.\label{SADprof}}
	\end{figure}\noindent
	As $l=v$ would bring us back to Example \ref{linegraph2}, it is enough to consider the case $l<v$, which
	in turn gives $l<q$ by our assumptions. Optimality of $\{\xi_{u,v}\}_{1\leq u\leq n}$ then implies for the corresponding averages
\begin{equation}\label{averages}
	\tfrac{1}{q-l}\sum_{u=l}^{q-1}\eta_0(u)\leq \tfrac{1}{r-q+1}\sum_{u=q}^{r}\eta_0(u),
\end{equation}
	since increasing the contribution of vertices $\{l,\dots,q-1\}$ to the expense of the contribution of vertices $\{q,q+1,\dots,r\}$ would strictly increase the terminal value $\eta(v)$ otherwise. In fact, if equality held in \eqref{averages}, there would by an SAD-profile corresponding to another optimal strategy with a strictly smaller support than $\{\xi_{u,v}\}_{1\leq u\leq n}$, which is a contradiction to our initial choice.

	With strict inequality in \eqref{averages}, optimality dictates that $\xi_{r,v}-\xi_{l,v}$ is maximal.
	By Lemma \ref{collection}\,(c) we know $\xi_{r,v}\leq\tfrac{1}{r-v+1}$ and this value can indeed be achieved in the SAD-process started at $v$ by first sharing equally among $\{v,\dots,r\}$, then equally among $\{l,\dots,q-1\}$.
	By duality (cf.\ Lemma \ref{dual}), this corresponds to two hypermoves in the water transport problem, first on the section $\{l,\dots,q-1\}$ then on $\{v,\dots,r\}$, and leads in this case to
	\[\kappa(v)=\tfrac{q-v}{(q-l)(r-v+1)}\sum_{u=l}^{q-1}\eta_0(u)+\tfrac{1}{r-v+1}\sum_{u=q}^{r}\eta_0(u).\]
	Since $l=v$ together with Lemma \ref{collection}\,(b) implies $q=v$, this formula in fact also applies to that case.
	
	Observe that the values $\xi_{r,v}=\tfrac{1}{r-v+1}$ and $\xi_{l,v}=\tfrac{q-v}{(q-l)(r-v+1)}$ already are determined by the choice of $l,q$ and $r$.
	
	\begin{figure}[H]
		\centering
		\includegraphics[scale=0.4]{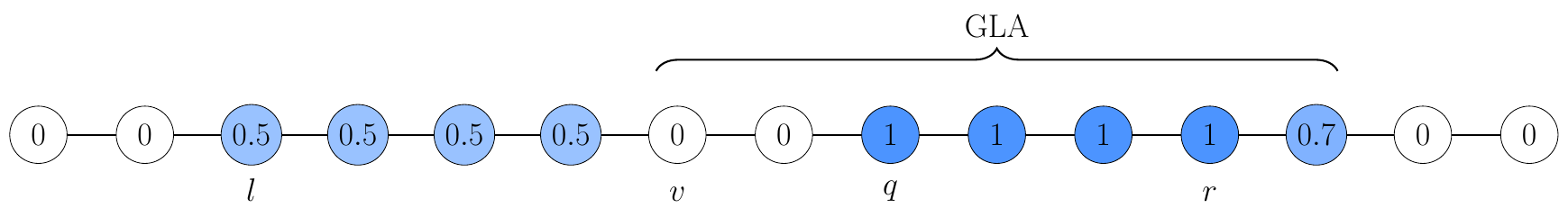}
		\caption{Even for a graph as simple as a finite path, the initial GLA sometimes has little to do with the
			optimal move (meta-)sequence.\label{15-line2}}
	\end{figure}\noindent
	In Figure \ref{15-line2}, a set of initial water levels on the path comprising 15 nodes
	is shown, for which the SAD-profile corresponding to an optimal move meta-sequence is the one depicted in Figure \ref{SADprof} above. From this instance, it can be seen that the GLA with respect to the initial water profile and its possible enhancements can be outperformed by improving another lattice animal as mentioned at the end of Subsection \ref{optimization}.
	
	When it comes to the complexity of finding $\kappa(v)$, we can greedily test all choices for $l,q,r$ (of which
	there are less than $n^3$). For each choice at most $n+3$ additions/subtractions and four multiplications/divisions have
	to be made to calculate either
	\begin{equation}\label{value}\begin{array}{c}
	\tfrac{q-v}{(q-l)\,(r-v+1)}\sum_{u=l}^{q-1}\limits\eta_0(u)+\tfrac{1}{r-v+1}\sum_{u=q}^{r}\limits\eta_0(u) \quad\text{or}\\
	\tfrac{1}{v-l+1}\sum_{u=l}^{\hat{q}}\limits\eta_0(u)+\tfrac{v-\hat{q}}{(r-\hat{q})\,(v-l+1)}\sum_{u=\hat{q}+1}^{r}\limits\eta_0(u),
	\end{array}
	\end{equation}
	depending on whether $v\leq q$ or $\hat{q}\leq v$, where $\hat{q}$ is the rightmost mode of $\{\xi_{u,v}\}_{1\leq u\leq n}$.
	Even though it is possible that there exist SAD-profiles with $q<v<\hat{q}$ corresponding to optimal move \mbox{(meta-)sequences}, by the above we know that
	there has to be one with either $v\leq q$ or $\hat{q}\leq v$ as well.
	The maximal value among those calculated in (\ref{value}) equals $\kappa(v)$, so the complexity is $\mathcal{O}(n^4)$.
	In fact, if we calculate and store all $\binom{n}{2}$ sums over sections of the array of initial water levels
	$\{\eta_0(u)\}_{1\leq u\leq n}$, this running time can be reduced to $\mathcal{O}(n^3)$.
\end{example}

\section{Observations and open problems}\label{concl}

The optimization problem of water transport on finite graphs as presented in this work appears to be quite
elusive from an algorithmic point of view, despite the fact that there always exists an optimal strategy consisting of finitely many (hyper)moves (cf.\ Theorem \ref{finitemacro}).
For instance, there is no monotonicity in the water movement of an optimal move sequence, neither in single pipes nor barrels.\vspace{0.25em}
\begin{minipage}{0.4\textwidth}
	To make this more precise, consider
	the water transport instance depicted on the right. The optimal move sequence consists of two hypermoves here: First use the water of the top barrel to increase the level in the empty barrel, then average over the bottom four vertices to attain $\eta(v)=\kappa(v)=\frac94$.
\end{minipage}\hspace*{0.4cm}
\begin{minipage}{0.55\textwidth}\vspace*{-1.5em}
	\begin{figure}[H]
		\includegraphics[scale=0.9]{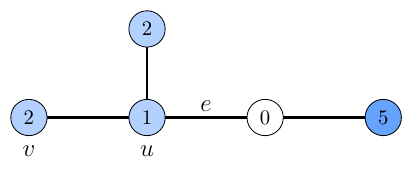}
		\caption{Optimal move sequences do move water back and forth.}
	\end{figure}
\end{minipage}\vspace*{0.12em}

Zooming in on the edge $e$, the water will first move from left to right, then in the opposite direction.
If the initial water levels are changed to $2-\epsilon$ in the top vertex and $1+\epsilon$ at $u$ for some small
$\epsilon>0$, the water level in the barrel at $u$ will first decrease to $1$, then increase to $\frac94$.
Such a mode of action is even possible at the target vertex itself: If in the instance depicted in Figure \ref{15-line2} the vertex just left of $v$ is taken to be the target vertex instead, it is not hard to see that its water level first decreases during the optimal (hyper)move sequence.

In fact, in view of Theorem \ref{finitemacro} it remains an open question, if an optimal hypermove sequence, which is minimal in terms of the number of moves (proven to be finite), can include the same move (i.e.\ averaging over the same set of barrels) more than once. Despite some efforts, no such instance was found and if this was disproved, the minimal number of hypermoves needed to attain water level $\kappa(v)$ at the target vertex would not only be finite, but bounded (by $2^{|V|}$).

When it comes to the
calculation of $\kappa(v)$, the examples in Subsection \ref{optimization} curtail the hope for
a simple approximative algorithm with general approximation guarantees. However, here also lies the potential
for future research: Can one prove any positive results based on approximative algorithms -- if not for general $G$ then at least for a certain class of graphs, such as trees for instance? How far off
can the lower bound based on greedy lattice animals, $\text{GLA}(v)$, be in the worst case (say for barrel capacity and number of vertices fixed)? Example \ref{starex} shows that $\kappa(v)$ can exceed $\text{GLA}(v)$ by a factor as large as $\log(n)$ for a water transport instance of size $\mathcal{O}(n^2)$.

There might be hope to apply approximative algorithms to water transport instances on random networks, e.g.\ the Erd\H{o}s-Rényi graph $G(n,p)$, and analyze the typical gap to $\kappa(v)$, as the most unfavorable graphs would presumably have a rather small chance to appear in such random graph models.

Another interesting aspect is how sensitive the water transport problem is to changes in the network.
While its objective is continuous with respect to the initial water profile, depending on the connectivity of the underlying network graph, it might be a lot more sensitive to the removal of edges (faulty lock) or nodes (leaking barrel).

\subsection*{Acknowledgement}
I'd like to take the opportunity to thank my former supervisor Olle Häggström, who was partly involved in the early stages of this work for both his contributions to it and his valuable feedback at the time.


\vspace{0.5cm}\begin{center}
	\begin{minipage}[t]{0.6\textwidth}
	{\sc \small Timo Vilkas\\
		Statistiska institutionen,\\
		Ekonomihögskolan vid Lunds universitet,\\
		220 07 Lund, Sweden\\}
		timo.vilkas@stat.lu.se\\
	\end{minipage}
\end{center}
\end{document}